\newtheorem{thm}{Theorem}
\newtheorem{pro}[thm]{Proposition}
\newtheorem{lmm}[thm]{Lemma}
\newtheorem{cor}[thm]{Corollary}
\newtheorem{ex}[thm]{Example}
{\theoremstyle{definition} 
\newtheorem{dfn}[thm]{Definition}
\newtheorem{rmk}[thm]{Remark}}
\newcommand{\N}{\mathbb{N}}
\newcommand{\Z}{\mathbb{Z}}
\newcommand{\A}{\mathbb{A}}
\renewcommand{\P}{\mathbb{P}}
\newcommand{\R}{\mathbb{R}}
\renewcommand{\O}{\mathcal{O}}
\newcommand{\TT}{\mathbb{T}}
\renewcommand{\tt}{{\bf t}}
\newcommand{\V}{\mathbb{V}}
\renewcommand{\L}{\mathscr{L}}
\renewcommand{\H}{\mathscr{H}}
\newcommand{\CC}{\mathbb{C}}
\newcommand{\Spec}{{\rm Spec}\,}
\newcommand{\supp}{{\rm supp}}
\newcommand{\Proj}{{\rm Proj}\,}
\newcommand{\st}{\mathsf{st}}
\newcommand{\IN}{\mathsf{in}}
\newcommand{\punc}{{\rm punc}}
\newcommand{\lin}{{\rm lin}}
\newcommand{\lex}{{\rm lex}}
\newcommand{\BB}{{\rm BB}}
\renewcommand\H{{\bf{H}}}
\def\acts{\mathrel{\reflectbox{$\righttoleftarrow$}}}
\def\pone{{
    \begin{picture}(8,8)
    \multiput(1,0)(0,6){2}{\line(1,0){6}}
    \multiput(1,0)(6,0){2}{\line(0,1){6}}
    \end{picture}\hspace{.02cm}
    }}
\def\ptwo{{
    \begin{picture}(14,8)
    \multiput(1,0)(0,6){2}{\line(1,0){12}}
    \multiput(1,0)(6,0){3}{\line(0,1){6}}
    \end{picture}\hspace{.02cm}
    }}
\def\pthree{{
    \begin{picture}(20,8)
    \multiput(1,0)(0,6){2}{\line(1,0){18}}
    \multiput(1,0)(6,0){4}{\line(0,1){6}}
    \end{picture}\hspace{.02cm}
    }}
\def\pfour{{
    \begin{picture}(26,8)
    \multiput(1,0)(0,6){2}{\line(1,0){24}}
    \multiput(1,0)(6,0){5}{\line(0,1){6}}
    \end{picture}\hspace{.02cm}
    }}
\def\poneone{{
    \begin{picture}(8,14)
    \multiput(1,0)(0,6){3}{\line(1,0){6}}
    \multiput(1,0)(6,0){2}{\line(0,1){12}}
    \end{picture}\hspace{.02cm}
    }}
\def\poneoneone{{
    \begin{picture}(8,20)
    \multiput(1,0)(0,6){4}{\line(1,0){6}}
    \multiput(1,0)(6,0){2}{\line(0,1){18}}
    \end{picture}\hspace{.02cm}
    }}
\def\ptwooneone{{
    \begin{picture}(14,20)
    \multiput(1,0)(0,6){2}{\line(1,0){12}}
    \multiput(1,12)(0,6){2}{\line(1,0){6}}
    \multiput(1,0)(6,0){2}{\line(0,1){18}}
    \put(13,0){\line(0,1){6}}
    \end{picture}\hspace{.02cm}
    }}
\def\ptwotwo{{
    \begin{picture}(14,14)
    \multiput(1,0)(0,6){3}{\line(1,0){12}}
    \multiput(1,0)(6,0){3}{\line(0,1){12}}
    \end{picture}\hspace{.02cm}
    }}
\def\pthreeone{{
    \begin{picture}(20,14)
    \multiput(1,0)(0,6){2}{\line(1,0){18}}
    \put(1,12){\line(1,0){6}}
    \multiput(1,0)(6,0){2}{\line(0,1){12}}
    \multiput(13,0)(6,0){2}{\line(0,1){6}}
    \end{picture}\hspace{.02cm}
    }}
\newcommand\defn[1]{{\bf #1}}
\begin{document} 

\title{On the cup product for Hilbert schemes of points in the plane}
\author{Mathias Lederer}
\email{mathias.lederer@uibk.ac.at}
\address{Department of Mathematics \\ 
University of Innsbruck \\ 
Technikerstrasse 21a \\
 A-6020 Innsbruck \\ 
 Austria}
\thanks{The author was partially supported by a Marie Curie International Outgoing Fellowship 
of the EU Seventh Framework Program}
\date{\today}
\keywords{Hilbert schemes of points, Bia\l ynicki-Birula theory, partitions, posets, Poincar\'e duality}
\subjclass[2000]{14C05; 14F25; 58E05; 06A07}

\begin{abstract} 
  We revisit Ellingsrud and Str\o mme's cellular decomposition of the Hilbert scheme of points in the projective plane. 
  We study the product of cohomology classes defined by the closures of cells, 
  deriving necessary conditions for the non-vanishing of cohomology classes. 
  Though our conditions are formulated in purely combinatorial terms, 
  the machinery for deriving them includes techniques from Bia\l ynicki-Birula theory: 
  We study closures of Bia\l ynicki-Birula cells in complete complex varieties equipped with ample line bundles. 
  We prove a necessary condition for two such closures to meet, and apply this criterion in our setting. 
\end{abstract}

\maketitle


\section{Introduction}\label{sec:intro}

The \defn{Hilbert scheme of $n$ points in the projective plane} 
is the moduli space of homogeneous ideals in $S := \CC[x_0,x_1,x_2]$, 
or equivalently, closed subschemes of $\P^2$, with constant Hilbert function $n$. 
The cellular decomposition $\P^2 = \A^2 \coprod \A^1 \coprod \A^0$ induces a decomposition into locally closed subschemes 
\[
  H^n(\P^2) = \coprod_{n_2 + n_1 + n_0 = n} H^{n_2}(\A^2) \times H^{n_1,\lin}(\A^2) \times H^{n_0,\punc}(\A^2) ,
\]
where the three factors parametrize ideals supported in the respective cofactors of $\P^2$.
Upon using the coordinate ring $S' := \CC[y_1,y_2]$ of $\A^2$
and identifying $\A^1 = \V(y_2)$ and $\A^0 = \V(y_1,y_2)$, the three factors appearing in the displayed coproduct read 
\begin{equation*}
  \begin{split}
    H^{n_2}(\A^2) & := \bigl\{ \text{ideals } I \subseteq S' : \dim(S' / I) = n_2 \bigr\} , \\
    H^{n_1,\lin}(\A^2) & := \bigl\{ \text{ideals } I \subseteq S' : \dim(S' / I) = n_1, \, \supp(I) \subseteq \V(y_2) \bigr\} , \\
    H^{n_0,\punc}(\A^2) & := \bigl\{ \text{ideals } I \subseteq S' : \dim(S' / I) = n_0, \, \supp(I) = \V(y_1,y_2) \bigr\} .
  \end{split}
\end{equation*}
They come in a chain of closed immersions 
\[
  H^{n,\punc}(\A^2) \subseteq H^{n,\lin}(\A^2) \subseteq H^n(\A^2) .
\]
The smallest member of the chain is called the \defn{punctual Hilbert scheme}, 
the largest the \defn{Hilbert scheme of points in the affine plane}. 
The scheme in the middle doesn't have a distinguished name, 
even though it is of utmost importance in linking Hilbert schemes of points to the ring of symmetric functions 
\cite[Corollary 9.15]{nakajima}. 

The scheme $H^n(\P^2)$ is smooth and projective of dimension $2n$ \cite{Fogarty_smoothness}. 
Ellingsrud and Str\o mme  
constructed cellular decompositions of the three factors $H^{n_2}(\A^2)$, $H^{n_1,\lin}(\A^2)$ and $H^{n_0,\punc}(\A^2)$, 
thus refining the above-displayed coproduct into a cellular decomposition \cite{esBetti, esCells}.
For doing so, they used specific actions of the torus $\CC^\star$ on 
$H^{n_2}(\A^2)$, $H^{n_1,\lin}(\A^2)$ and $H^{n_0,\punc}(\A^2)$, respectively.  
The fixed points of all three actions are monomial ideals $M_\Delta$. 
Here the subscript $\Delta$ is the \defn{standard set} or \defn{staircase} of the monomial ideal $M_\Delta$, 
i.e., the set of elements of $\N^2$ not showing up as exponents in the monomial ideal. 
Thus in particular, $|\Delta| = \dim_\CC S' / I$. 
Since the fixed points of the action are isolated, the \defn{Bia\l ynicki-Birula sinks}, or \defn{BB sinks}
\begin{equation*}
  \begin{split}
    H^{\Delta_2}_\lex(\A^2) & := \bigl\{ I \in H^{n_2}(\A^2) : \lim_{t \to 0} t.I = M_{\Delta_2} \bigr\} , \\
    H^{\Delta_1,\lin}_\lex(\A^2) & := \bigl\{ I \in H^{n_1,\lin}(\A^2) : \lim_{t \to 0} t.I = M_{\Delta_1} \bigr\} , \\
    H^{\Delta_0,\punc}_\lex(\A^2) & := \bigl\{ I \in H^{n_0,\punc}(\A^2) : \lim_{t \to 0} t.I = M_{\Delta_0} \bigr\} 
  \end{split}
\end{equation*}
are affine spaces \cite{bialynickiBirula}. 
Our notation is motivated by the fact that Ellingsrud and Str\o mme's choice of torus actions implies that 
the BB sinks are the schemes parametrizing ideals whose lexicographic Gr\"obner deformations are the given monomial ideals, 
\begin{equation*}
  \begin{split}
    H^{\Delta_2}_\lex(\A^2) & = \bigl\{ I \in H^{n_2}(\A^2) : \IN_\lex(I) = M_{\Delta_2} \bigr\} , \\
    H^{\Delta_1,\lin}_\lex(\A^2) & = \bigl\{ I \in H^{n_1,\lin}(\A^2) : \IN_\lex(I) = M_{\Delta_1} \bigr\} , \\
    H^{\Delta_0,\punc}_\lex(\A^2) & = \bigl\{ I \in H^{n_0,\punc}(\A^2) : \IN_\lex(I) = M_{\Delta_0} \bigr\} . 
  \end{split}
\end{equation*}
The three schemes are therefore also known as \defn{Gr\"obner basins}. 

Ellingsrud and Str\o mme, with a later correction by Huibregtse \cite{huibregtseEllingsrud}, 
were able to determine the dimensions of the three Gr\"obner basins displayed above. 
They proved that 
\begin{equation*}
  \begin{split}
    \dim(H^{\Delta}_\lex(\A^2)) & = |\Delta| + h(\Delta) , \\
    \dim(H^{\Delta,\lin}_\lex(\A^2)) & = |\Delta| , \\
    \dim(H^{\Delta,\punc}_\lex(\A^2)) & = |\Delta| - w(\Delta) ,
  \end{split}
\end{equation*}
where $h(\Delta)$ is the \defn{height} and $w(\Delta)$ is the \defn{width} of $\Delta$. 
Conca and Valla proved the same formul\ae\ using Hilbert-Burch matrices \cite{hilbertBurchMatrices}. 
The cellular decomposition of the Hilbert scheme of points in the projective plane thus reads 
\[
  H^n(\P^2) = \coprod_{|\Delta_2| + |\Delta_1| + |\Delta_0| = n} 
  (\Delta_2, \Delta_1, \Delta_0)^\circ , 
\]
where we use the shorthand notation 
\[
  (\Delta_2, \Delta_1, \Delta_0)^\circ := H^{\Delta_2}_\lex(\A^2) \times H^{\Delta_1,\lin}_\lex(\A^2) \times H^{\Delta_0,\punc}_\lex(\A^2)
\]
for the affine cell corresponding to torus fixed point $(M_{\Delta_2}, M_{\Delta_1}, M_{\Delta_0})$. 
Moreover, ignoring any danger of confusion with triples of standard sets, we denote by 
\[
  (\Delta_2, \Delta_1, \Delta_0) := \overline{(\Delta_2, \Delta_1, \Delta_0)^\circ} , 
\]
the corresponding closed variety, and by 
\[
  [\Delta_2, \Delta_1, \Delta_0] := [(\Delta_2, \Delta_1, \Delta_0)] 
\]
its cohomology class. 
Since the Hilbert scheme of points in the projective plane is smooth and projective, 
the classes $[\Delta_2, \Delta_1, \Delta_0]$, for all triples of standard sets such that $|\Delta_2| + |\Delta_1| + |\Delta_0| = n$, 
form a basis of its cohomology module $\H^\star(H^n(\P^2))$ \cite[Example 1.9.1]{fultonIntersectionTheory}. 
Ellingsrud and Str\o mme's formul\ae\ for the dimension of the affine cells of $H^n(\P^2)$ 
completely determine the additive structure of $\H^\star(H^n(\P^2))$: 
For $k=0,\ldots,2n$, a basis of $\H^k(H^n(\P^2))$, the $k$-th graded piece of cohomology\footnote{%
Since cohomology vanishes in odd degrees, we skip the factor 2 appearing in the actual values of $\star$.
We might as well be working in the Chow ring, which is isomorphic to the cohomology ring, 
the degree $k$ part in Chow corresponding to the degree $2k$ part in cohomology. 
} 
of $H^n(\P^2)$ given by 
\[
  T^k := \bigl\{ (\Delta_2, \Delta_1, \Delta_0) : |\Delta_2| + h(\Delta_2) + |\Delta_1| + |\Delta_0| - w(\Delta_0) = k \bigr\} . 
\]
The goal of the present paper is to shed some light on the multiplicative structure of $\H^\star(H^n(\P^2))$. 
We will show that it rarely happens that $[\Delta_2, \Delta_1, \Delta_0] \cdot [\Delta'_2, \Delta'_1, \Delta'_0] \neq 0$, 
making the matrix of multiplication in $\H^\star(H^n(\P^2))$ with respect to Ellingsrud and Str\o mme's basis rather sparse. 
The crucial notion is that of a family of partial orders, given by integral weight vectors. 

\begin{dfn}
\label{dfn:orderings}
  \begin{enumerate}[(a)]
  \item Let $\st_{3,n}$ denote the set of triples of standard sets whose cardinalities sum to $n$. 
  
  \item For each general enough\footnote{%
  Sufficient genericity of $\lambda$ holds if the only fixed points of the action on $H^n(\A^2)$ with weight $\lambda$ 
  are monomial ideals. This holds true if $\langle \lambda, \alpha - \beta \rangle \neq 0$ 
  for all $\alpha,\beta$ lying in any standard set of cardinality $2n+1$, say. 
  }
  weight vector $\lambda \in \Z^2$ such that $\lambda_1 < 0 < \lambda_2$, 
  we define a partial ordering $\leq_\lambda$ on $\st_{3,n}$ 
  by saying that $(\Delta_2, \Delta_1, \Delta_0) \leq_\lambda (\Delta'_2, \Delta'_1, \Delta'_0)$ if
  \begin{itemize}
    \item $|\Delta_2| \geq |\Delta'_2|$ and $|\Delta_0| \leq |\Delta'_0|$, or 
    \item $|\Delta_j| = |\Delta'_j|$ for all $j$ and 
    \begin{itemize}
      \item[$\circ$] $\langle \mu, \Delta_2 \rangle 
      := \sum_{\alpha \in \Delta_2}\langle \mu, \alpha \rangle \geq \langle \mu, \Delta'_2 \rangle$, 
      where $\mu \in \Z^2$ is such that $\mu_1 \ll \mu_2 < 0$, 
      \item[$\circ$] $\langle \lambda, \Delta_1 \rangle \geq \langle \lambda, \Delta'_1 \rangle$, and 
      \item[$\circ$] $\langle \nu, \Delta_0 \rangle \geq \langle \nu, \Delta'_0 \rangle$, 
      where $\nu \in \Z^2$ is such that $\mu_1 \ll \mu_2 < 0$. 
    \end{itemize}  
  \end{itemize}  
  \item We use the involution 
  \[
    \iota : \st_{3,n} \to \st_{3,n} : (\Delta_2, \Delta_1, \Delta_0) \mapsto (\Delta_0^t, \Delta_1^t, \Delta_2^t) , 
  \]
  which takes partial ordering $\leq_\lambda$ to partial ordering $\leq_{-(\lambda_2,\lambda_1)}$, 
  and $T^k$ to $T^{2n-k}$. 
  \end{enumerate}
\end{dfn}

``join''

\begin{thm}
\label{thm:upperTriangularity}
  Assume that the product of two classes $[\Delta_2, \Delta_1, \Delta_0]$ and $[\Delta'_2, \Delta'_1, \Delta'_0] \neq 0$ 
  is a non-zero element of the cohomology ring $\H^\star(H^n(\P^2))$ of the Hilbert scheme of points in the projective plane. 
  Then $(\Delta_2, \Delta_1, \Delta_0) \leq_\lambda \iota(\Delta'_2, \Delta'_1, \Delta'_0)$ for all general enough weights $\lambda$, 
  with equality holding only if  $(\Delta_2, \Delta_1, \Delta_0) = \iota(\Delta'_2, \Delta'_1, \Delta'_0)$.
\end{thm}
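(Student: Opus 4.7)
The plan is to derive the theorem by combining Poincar\'e duality on the smooth projective variety $H^n(\P^2)$ with the paper's main technical result---a necessary condition for two closures of Bia\l ynicki-Birula cells to meet in a complete polarized variety. I first identify the Poincar\'e-dual basis of the Ellingsrud-Str\o mme basis $\{[\Delta_2,\Delta_1,\Delta_0]\}_{\st_{3,n}}$. The dimension identity $\dim D + \dim \iota(D) = 2n$ is a direct consequence of the formulas of Ellingsrud-Str\o mme-Huibregtse combined with the symmetries $|\Delta^t|=|\Delta|$, $h(\Delta^t)=w(\Delta)$, $w(\Delta^t)=h(\Delta)$. To establish the duality itself I realize $\iota$ geometrically: first, the opposite Bia\l ynicki-Birula decomposition (for the action of weight $-\lambda$) has BB$+$ sinks $V_p$ at the same torus fixed points, producing the standard Poincar\'e-dual basis $\{[V_p]\}$; second, the involution of $\P^2$ swapping the flag used to define the Ellingsrud-Str\o mme decomposition with its opposite lies in the connected group $\mathrm{PGL}_3$, inducing an automorphism $\Sigma$ of $H^n(\P^2)$ that acts trivially on cohomology and identifies $\Sigma(V_p)$ with the cell $\iota(D)$ of the original decomposition. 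Hence $[\iota(D)] = [V_p]$ and
\[
  \int_{H^n(\P^2)} [D] \cdot [\iota(D')] = \delta_{D,D'} .
\]

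With the dual basis in hand, assume $[D] \cdot [D']$ is a non-zero cohomology class. Expanding it in the dual basis, some coefficient $c_{D''} = \int[D]\cdot[D']\cdot[D''] \neq 0$. Since $D$, $D'$, $D''$ are closures of Bia\l ynicki-Birula cells, their classes are effective, and positivity of the intersection number forces the triple intersection $D \cap D' \cap D''$ to be non-empty; in particular $D \cap D' \neq \emptyset$. I then apply the paper's main technical result to this intersecting pair of BB$+$ cell closures. With an ample line bundle on $H^n(\P^2)$ whose character at each $T$-fixed point $(M_{\Delta_2},M_{\Delta_1},M_{\Delta_0})$ splits naturally across the three factors $H^{\Delta_2}(\A^2) \times H^{\Delta_1,\lin}(\A^2) \times H^{\Delta_0,\punc}(\A^2)$, the necessary condition for the two closures to meet becomes an inequality on torus characters that produces the three auxiliary weight vectors $\mu$, $\lambda$, $\nu$ of Definition \ref{dfn:orderings}. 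Rephrased through $\iota$, it translates into precisely $(\Delta_2,\Delta_1,\Delta_0) \leq_\lambda \iota(\Delta'_2,\Delta'_1,\Delta'_0)$: first the comparisons of $|\Delta_j|$ against $|\iota(\Delta'_2,\Delta'_1,\Delta'_0)_j|$, and then, if all sizes coincide, the three weighted-sum inequalities. The equality case forces the BB criterion to be tight, which by the first step occurs only at the Poincar\'e-dual pairing $D = \iota(D')$.

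The main obstacle will be the final translation step: converting the abstract Bia\l ynicki-Birula intersection criterion, formulated in terms of weights of the torus action and a fixed ample line bundle on a general complete variety, into the specific three-factor combinatorial ordering of Definition \ref{dfn:orderings}. This requires both a judicious choice of polarization on $H^n(\P^2)$ whose character at each fixed point decomposes compatibly with the Ellingsrud-Str\o mme product decomposition, and careful bookkeeping to match the combinatorial action of $\iota$ with the geometric involution used in the first step, so that the resulting inequality indeed involves $\iota(\Delta'_2,\Delta'_1,\Delta'_0)$ rather than $(\Delta'_2,\Delta'_1,\Delta'_0)$ itself.
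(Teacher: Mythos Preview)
Your proposal has a genuine gap at its core: you repeatedly treat the Ellingsrud--Str\o mme cells $(\Delta_2,\Delta_1,\Delta_0)^\circ$ as Bia\l ynicki-Birula cells for a single $T$-action on $H^n(\P^2)$, but they are not. The paper states this explicitly at the end of Section~\ref{sec:esRevisited}: no single weight $w$ produces the lexicographic Gr\"obner basins simultaneously on all three factors $H^{n_2}(\A^2)$, $H^{n_1,\lin}(\A^2)$, $H^{n_0,\punc}(\A^2)$; Proposition~\ref{pro:BBCells} needs two distinct weights $w,w'$ with incompatible asymptotics. Consequently there is no ``opposite BB decomposition for the action of weight $-\lambda$'' whose cells are the $\iota(D)$, and Theorem~\ref{thm:intersectionOfClosures} cannot be applied directly to the pair $D,D'$ as you propose.

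The paper's actual argument circumvents this by \emph{enlarging} each Ellingsrud--Str\o mme cell into a genuine BB cell for a chosen weight $w''$: the lexicographic factors $H^{\Delta_2}_{\lex}(\A^2)$ and $H^{\Delta_0,\punc}_{\lex}(\A^2)$ are contained in the $w''$-BB cells of the \emph{generic monomial ideals} $M_{\Gamma_2},M_{\Gamma_0}$ (Definition~\ref{dfn:genericMonomialIdeals}), and only after this enlargement does Theorem~\ref{thm:intersectionOfClosures} apply. The resulting weight inequality, computed via Lemma~\ref{lmm:closedImmersion}, involves the $\Gamma$'s rather than the $\Delta$'s, so information is lost; the paper recovers it by letting $d\to\infty$ and sending $w''$ to the two extreme rays of its admissible cone to first extract the size comparisons $|\Delta_2|\geq|\Delta'_0|$, $|\Delta_0|\leq|\Delta'_2|$. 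Only \emph{after} equality of sizes is established does the argument localize to each factor separately, where the Ellingsrud--Str\o mme cells finally are honest BB cells and the three weight inequalities with $\mu,\lambda,\nu$ emerge. Your plan skips this two-stage structure and the generic-monomial-ideal embedding entirely, which is where the real work lies. (A minor additional point: your detour through a third class $D''$ and ``positivity of the intersection number'' is unnecessary and not quite correct as stated; $[D]\cdot[D']\neq 0$ already forces $D\cap D'\neq\emptyset$ since the refined intersection product is supported on the set-theoretic intersection.)
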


The case in which $|\Delta_j| = |\Delta'_{2-j}|$ for all $j$ is the one where Theorem \ref{thm:upperTriangularity} 
reveals most about the cohomology of $H^n(\P^2)$. 
In this case the relevant structures are partial orderings $\leq_\xi$ induced by weights 
$\xi = \mu, \lambda, \nu$ on the individual sets 
\[
  \st_m := \bigl\{ \text{standard sets } \Delta \subseteq \N^2 : |\Delta| = m \bigr\} ,
\]
in which $\Delta \leq_\xi \Delta'$ if $\langle \xi, \Delta \rangle \geq \langle \xi, \Delta' \rangle$. 
Remember the \defn{natural partial ordering}, or \defn{dominance partial ordering} on $\st_m$ \cite[p.7]{Macdonald}, 
in which $\Delta \leq \Delta'$ if the two equivalent conditions are satisfied, 
\begin{itemize}
  \item for all $i$, the sum of the sizes of the lowest $i$ rows of $\Delta$ is at most as large as the 
  sum of the sizes of the lowest $i$ rows of $\Delta'$, and 
  \item for all $j$, the sum of the sizes of the leftmost $j$ columns of $\Delta$ is at least as large as the 
  sum of the sizes of the leftmost $j$ columns of $\Delta'$. 
\end{itemize}
We will show in the Appendix that partial orderings $\leq_\xi$, for $\xi = \mu, \lambda, \nu$, 
are \defn{refinements} of $\leq$ in the sense that $\Delta \leq \Delta'$ implies $\Delta \leq_\xi \Delta'$. 
Here are a few examples to illustrate how far refinement goes. 

\begin{ex}
\label{ex:posets}
  \begin{enumerate}[(i)]
    \item For $m \leq 5$, the natural partial ordering on $\st_m$ is known to be a total ordering. 
    The partial orderings $\leq_\xi$ and $\leq$ coincide. 
    \item The natural partial ordering on $\st_6$ is known to have incomparable elements. 
    They are also incomparable in partial orderings $\leq_\xi$.  
    \item Figure \ref{fig:refineStSeven} shows the Hasse diagram of the poset $(\st_7,\leq)$, 
    arrows pointing from smaller to larger elements. 
    The three weight vectors serve as tie-breakers for elements incomparable under the natural partial ordering. 
    However, they do so in three different ways. 
    The figure also shows the additional arrows in the respective Hasse diagrams of $(\st_7,\leq_\xi)$, 
    drawn in {\color{red} red for $\xi = \mu$}, 
    {\color{blue} blue for $\xi = \lambda$}, and 
    {\color{ForestGreen} green for $\xi = \nu$}. 
    \item Figure \ref{fig:refineStEight} shows one half of the Hasse diagram of the poset $(\st_8,\leq)$, 
    the other half arising by symmetry and transposition. 
    The three weight vectors are tie-breakers for all standard sets but the two at the far right. 
    Moreover, two choices of $\lambda$ lead to two different poset structures: 
    The dashed and dotted blue arrows, respectively, show the Hasse diagram in the cases $\lambda_1 + \lambda_2 < 0$ and 
    $\lambda_1 + \lambda_2 > 0$. 
  \end{enumerate}
\end{ex}

\begin{center}
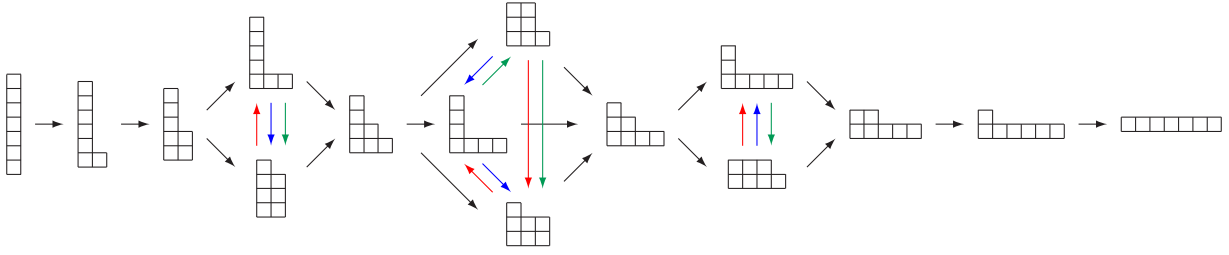
\begin{figure}[ht]
  \unitlength0.19mm
  \begin{picture}(850,170)
  \put(0,50){
    \multiput(0,0)(10,0){2}{\line(0,1){70}}
    \multiput(0,0)(0,10){8}{\line(1,0){10}}
  }
  \put(20,85){\vector(1,0){20}}
  \put(50,55){
    \multiput(0,0)(10,0){2}{\line(0,1){60}}
    \put(20,0){\line(0,1){10}}
    \multiput(0,0)(0,10){2}{\line(1,0){20}}
    \multiput(0,20)(0,10){5}{\line(1,0){10}}
  }
  \put(80,85){\vector(1,0){20}}
  \put(110,60){
    \multiput(0,0)(10,0){2}{\line(0,1){50}}
    \put(20,0){\line(0,1){20}}
    \multiput(0,0)(0,10){3}{\line(1,0){20}}
    \multiput(0,30)(0,10){3}{\line(1,0){10}}
  }
  \put(140,95){\vector(1,1){20}}
  \put(140,75){\vector(1,-1){20}}
  \put(170,110){
    \multiput(0,0)(10,0){2}{\line(0,1){50}}
    \multiput(20,0)(10,0){2}{\line(0,1){10}}
    \multiput(0,0)(0,10){2}{\line(1,0){30}}
    \multiput(0,20)(0,10){4}{\line(1,0){10}}
  }
  \put(175,20){
    \multiput(0,0)(10,0){2}{\line(0,1){40}}
    \put(20,0){\line(0,1){30}}
    \multiput(0,0)(0,10){4}{\line(1,0){20}}
    \multiput(0,40)(0,10){1}{\line(1,0){10}}
  }
  \put(210,115){\vector(1,-1){20}}
  \put(210,55){\vector(1,1){20}}
  \put(240,65){
    \multiput(0,0)(10,0){2}{\line(0,1){40}}
    \put(20,0){\line(0,1){20}}
    \put(30,0){\line(0,1){10}}
    \multiput(0,0)(0,10){2}{\line(1,0){30}}
    \multiput(0,20)(0,10){1}{\line(1,0){20}}
    \multiput(0,30)(0,10){2}{\line(1,0){10}}
  }
  \put(290,105){\vector(1,1){40}}
  \put(280,85){\vector(1,0){20}}
  \put(290,65){\vector(1,-1){40}}
  \put(350,140){
    \multiput(0,0)(10,0){3}{\line(0,1){30}}
    \put(30,0){\line(0,1){10}}
    \multiput(0,0)(0,10){2}{\line(1,0){30}}
    \multiput(0,20)(0,10){2}{\line(1,0){20}}
  }
  \put(310,65){
    \multiput(0,0)(10,0){2}{\line(0,1){40}}
    \multiput(20,0)(10,0){3}{\line(0,1){10}}
    \multiput(0,0)(0,10){2}{\line(1,0){40}}
    \multiput(0,20)(0,10){3}{\line(1,0){10}}
  }
  \put(350,0){
    \multiput(0,0)(10,0){2}{\line(0,1){30}}
    \multiput(20,0)(10,0){2}{\line(0,1){20}}
    \multiput(0,0)(0,10){3}{\line(1,0){30}}
    \multiput(0,30)(0,10){1}{\line(1,0){10}}
  }
  \put(390,125){\vector(1,-1){20}}
  \put(360,85){\vector(1,0){40}}
  \put(390,45){\vector(1,1){20}}
  \put(420,70){
    \multiput(0,0)(10,0){2}{\line(0,1){30}}
    \put(20,0){\line(0,1){20}}
    \multiput(30,0)(10,0){2}{\line(0,1){10}}
    \multiput(0,0)(0,10){2}{\line(1,0){40}}
    \multiput(0,20)(0,10){1}{\line(1,0){20}}
    \multiput(0,30)(0,10){1}{\line(1,0){10}}
  }
  \put(470,95){\vector(1,1){20}}
  \put(470,75){\vector(1,-1){20}}
  \put(500,110){
    \multiput(0,0)(10,0){2}{\line(0,1){30}}
    \multiput(20,0)(10,0){4}{\line(0,1){10}}
    \multiput(0,0)(0,10){2}{\line(1,0){50}}
    \multiput(0,20)(0,10){2}{\line(1,0){10}}
  }
  \put(505,40){
    \multiput(0,0)(10,0){4}{\line(0,1){20}}
    \put(40,0){\line(0,1){10}}
    \multiput(0,0)(0,10){2}{\line(1,0){40}}
    \multiput(0,20)(0,10){1}{\line(1,0){30}}
  }
  \put(560,115){\vector(1,-1){20}}
  \put(560,55){\vector(1,1){20}}
  \put(590,75){
    \multiput(0,0)(10,0){3}{\line(0,1){20}}
    \multiput(30,0)(10,0){3}{\line(0,1){10}}
    \multiput(0,0)(0,10){2}{\line(1,0){50}}
    \multiput(0,20)(0,10){1}{\line(1,0){20}}
  }
  \put(650,85){\vector(1,0){20}}
  \put(680,75){
    \multiput(0,0)(10,0){2}{\line(0,1){20}}
    \multiput(20,0)(10,0){5}{\line(0,1){10}}
    \multiput(0,0)(0,10){2}{\line(1,0){60}}
    \multiput(0,20)(0,10){1}{\line(1,0){10}}
  }
  \put(750,85){\vector(1,0){20}}
  \put(780,80){
    \multiput(0,0)(10,0){8}{\line(0,1){10}}
    \multiput(0,0)(0,10){2}{\line(1,0){70}}
  }
  \color{red}
  \put(175,70){\vector(0,1){30}}
  \put(365,130){\vector(0,-1){90}}
  \put(340,37.5){\vector(-1,1){20}}
  \put(515,70){\vector(0,1){30}}
  \color{blue}
  \put(185,100){\vector(0,-1){30}}
  \put(333,57.5){\vector(1,-1){20}}
  \put(340,132.5){\vector(-1,-1){20}}
  \put(525,70){\vector(0,1){30}}
  \color{ForestGreen}
  \put(195,100){\vector(0,-1){30}}
  \put(375,130){\vector(0,-1){90}}
  \put(333,112.5){\vector(1,1){20}}
  \put(535,100){\vector(0,-1){30}}
  \end{picture}
\caption{The poset $(\st_7,\leq)$ and total orderings induced by weights}
\label{fig:refineStSeven}
\end{figure}
\end{center}

\begin{center}
\begin{figure}[ht]
  \unitlength0.19mm
  \begin{picture}(500,170)
  \put(0,50){
    \multiput(0,0)(10,0){2}{\line(0,1){80}}
    \multiput(0,0)(0,10){9}{\line(1,0){10}}
  }
  \put(20,90){\vector(1,0){20}}
  \put(50,55){
    \multiput(0,0)(10,0){2}{\line(0,1){70}}
    \put(20,0){\line(0,1){10}}
    \multiput(0,0)(0,10){2}{\line(1,0){20}}
    \multiput(0,20)(0,10){6}{\line(1,0){10}}
  }
  \put(80,90){\vector(1,0){20}}
  \put(110,60){
    \multiput(0,0)(10,0){2}{\line(0,1){60}}
    \put(20,0){\line(0,1){20}}
    \multiput(0,0)(0,10){3}{\line(1,0){20}}
    \multiput(0,30)(0,10){4}{\line(1,0){10}}
  }
  \put(140,100){\vector(1,1){20}}
  \put(140,80){\vector(1,-1){20}}
  \put(170,110){
    \multiput(0,0)(10,0){2}{\line(0,1){50}}
    \multiput(20,0)(10,0){2}{\line(0,1){10}}
    \multiput(0,0)(0,10){2}{\line(1,0){30}}
    \multiput(0,20)(0,10){4}{\line(1,0){10}}
  }
  \put(175,10){
    \multiput(0,0)(10,0){2}{\line(0,1){50}}
    \multiput(20,0)(10,0){1}{\line(0,1){30}}
    \multiput(0,0)(0,10){4}{\line(1,0){20}}
    \multiput(0,40)(0,10){2}{\line(1,0){10}}
  }
  \put(210,60){\vector(1,2){20}}
  \put(210,135){\vector(1,0){20}}
  \put(210,35){\vector(1,0){20}}
  \put(240,110){
    \multiput(0,0)(10,0){2}{\line(0,1){50}}
    \put(20,0){\line(0,1){20}}
    \put(30,0){\line(0,1){10}}
    \multiput(0,0)(0,10){2}{\line(1,0){30}}
    \multiput(0,20)(0,10){1}{\line(1,0){20}}
    \multiput(0,30)(0,10){3}{\line(1,0){10}}
  }
  \put(245,15){
    \multiput(0,0)(10,0){3}{\line(0,1){40}}
    \multiput(0,0)(0,10){5}{\line(1,0){20}}
  }
  \put(280,135){\vector(1,0){80}}
  \put(280,35){\vector(1,0){20}}
  \put(310,15){
    \multiput(0,0)(10,0){2}{\line(0,1){40}}
    \put(20,0){\line(0,1){30}}
    \put(30,0){\line(0,1){10}}
    \multiput(0,0)(0,10){2}{\line(1,0){30}}
    \multiput(0,20)(0,10){2}{\line(1,0){20}}
    \multiput(0,40)(0,10){1}{\line(1,0){10}}
  }
  \put(350,35){\vector(1,0){20}}
  \put(380,15){
    \multiput(0,0)(10,0){2}{\line(0,1){40}}
    \multiput(20,0)(10,0){2}{\line(0,1){20}}
    \multiput(0,0)(0,10){3}{\line(1,0){30}}
    \multiput(0,30)(0,10){2}{\line(1,0){10}}
  }
  \put(375,110){
    \multiput(0,0)(10,0){2}{\line(0,1){50}}
    \multiput(20,0)(10,0){3}{\line(0,1){10}}
    \multiput(0,0)(0,10){2}{\line(1,0){40}}
    \multiput(0,20)(0,10){4}{\line(1,0){10}}
  }
  \put(425,135){\vector(1,0){20}}
  \put(422.5,35){\vector(1,0){25}}
  \put(422.5,55){\vector(1,2){25}}
  \put(460,115){
    \multiput(0,0)(10,0){2}{\line(0,1){40}}
    \multiput(20,0)(10,0){1}{\line(0,1){20}}
    \multiput(30,0)(10,0){2}{\line(0,1){10}}
    \multiput(0,0)(0,10){2}{\line(1,0){40}}
    \multiput(0,20)(0,10){1}{\line(1,0){20}}
    \multiput(0,30)(0,10){2}{\line(1,0){10}}
  }
  \put(465,20){
    \multiput(0,0)(10,0){3}{\line(0,1){30}}
    \multiput(30,0)(10,0){1}{\line(0,1){20}}
    \multiput(0,0)(0,10){3}{\line(1,0){30}}
    \multiput(0,30)(0,10){1}{\line(1,0){20}}
  }
  \color{red}
  \put(175,70){\vector(0,1){30}}
  \put(245,100){\vector(0,-1){30}}
  \put(395,70){\vector(0,1){30}}
  \color{blue}
  \put(185,100){\vector(0,-1){30}}
  \put(255,70){\vector(0,1){30}}
  \multiput(275,100)(4,-4){7}{\line(1,-1){3}}
  \put(302,73){\vector(1,-1){3}}
  \multiput(330,70)(4,4){7}{\line(1,1){3}}
  \put(357,97){\vector(1,1){3}}
  \multiput(373,100)(-1.5,-1.5){19}{\circle*{1}}
  \put(345,72){\vector(-1,-1){2}}
  \multiput(405,100)(0,-5){5}{\line(0,-1){4}}
  \put(405,73){\vector(0,-1){3}}
  \color{ForestGreen}
  \put(195,100){\vector(0,-1){30}}
  \put(265,70){\vector(0,1){30}}
  \put(386,100){\vector(-1,-1){30}}
  \end{picture}
\caption{The poset $(\st_8,\leq)$ and partial orderings induced by weights}
\label{fig:refineStEight}
\end{figure}
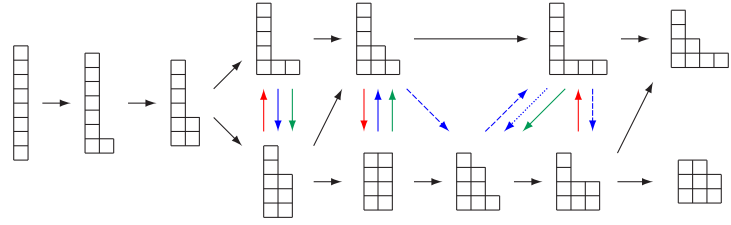
\end{center}

The upshot of these examples --- and may more which shall not be presented here --- is the following: 
The larger $m$, the farther is $\leq$ from being a total ordering. 
Weights $\mu$ and $\nu$ repair this shortcoming to a certain extent, making some but not all incomparable pairs comparable. 
For weight $\lambda$, the situation is even better: 
Though some pairs remain incomparable under $\leq_\lambda$, 
several $\lambda$ exist for which partial orderings $\leq_\lambda$ 
are different from each other. 
Theorem \ref{thm:upperTriangularity} therefore allows to derive that 
$[\Delta_2, \Delta_1, \Delta_0] \cdot [\Delta'_2, \Delta'_1, \Delta'_0] = 0$ in many more cases 
than its analogue would if we replaced $\leq_\lambda$ by the natural partial ordering. 

\begin{cor}
  For $k=0,\ldots,2n$, let $T^k$ be the basis of $\H^k(H^n(\P^2))$
  given by all $(\Delta_2, \Delta_1, \Delta_0) \in \st_{3,n}$ such that 
  $n + h(\Delta_2) - w(\Delta_0) = k$. 
  For each numbering $(t_1,\ldots,t_d)$ of $T^k$ such that whenever $t_i <_\lambda t_j$, then $i < j$, 
  the matrix of the Poincar\'e pairing 
  \[
    \H^k(H^n(\P^2)) \times \H^{2n - k}(H^n(\P^2)) \to \Z ,
  \]
  with respect to bases $(t_1,\ldots,t_d)$ on the first factor and $(\iota(t_1),\ldots,\iota(t_d))$ on the second factor 
  is upper triangular with 1s on the diagonal. 
\end{cor}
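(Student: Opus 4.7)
The plan is to combine Theorem~\ref{thm:upperTriangularity} with the unimodularity of the Poincar\'e pairing on $H^n(\P^2)$. Set $M_{ij} := \int_{H^n(\P^2)} [t_i] \cdot [\iota(t_j)]$; the goal is to show that the matrix $M$ is upper triangular with $1$s on the diagonal.

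For upper triangularity, suppose $M_{ij} \neq 0$. Applying Theorem~\ref{thm:upperTriangularity} to the product $[t_i] \cdot [\iota(t_j)]$ yields $t_i \leq_\lambda \iota(\iota(t_j)) = t_j$, with equality forcing $t_i = t_j$ and hence $i = j$. For $i > j$, the numbering compatibility $t_r <_\lambda t_s \Rightarrow r < s$ precludes the strict inequality, while $i = j$ is impossible; consequently $M_{ij} = 0$.

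For the diagonal values, observe that $H^n(\P^2)$ admits a cellular decomposition concentrated in even real degrees, so $\H^\star(H^n(\P^2);\Z)$ is torsion-free and the Poincar\'e pairing $\H^k \times \H^{2n-k} \to \Z$ is unimodular. Combined with upper triangularity this gives $\prod_i M_{ii} = \det M = \pm 1$, forcing each $M_{ii} \in \{\pm 1\}$. To rule out the minus sign, I would invoke Bia\l ynicki-Birula duality for the underlying $\CC^\star$-action whose BB$^+$ sink closures are the cycles $\overline{T^\circ}$: the Poincar\'e-dual basis of $\{[\overline{T^\circ}]\}_{T \in T^k}$ is the basis $\{[\overline{C^-_{M_T}}]\}$ of opposite BB cell closures, so $M_{ii}$ equals the coefficient of $[\overline{C^-_{M_{t_i}}}]$ in the expansion of $[\overline{\iota(t_i)^\circ}]$ in this dual basis. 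I expect that coefficient to be the transverse intersection multiplicity at a single common fixed point and therefore $+1$.

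The main obstacle is the positivity of the diagonal entries, since Theorem~\ref{thm:upperTriangularity} together with unimodularity yields only $\pm 1$. The cleanest route reduces this to showing that the effective cycles $\overline{\iota(t_i)^\circ}$ and $\overline{C^-_{M_{t_i}}}$ agree up to a strictly upper-triangular correction (in the $\leq_\lambda$-ordering on $T^{2n-k}$) with leading coefficient $+1$, or equivalently that the two BB$^+$ cell closures $\overline{C^+_{M_{t_i}}}$ and $\overline{C^+_{M_{\iota(t_i)}}}$ meet properly, in which case the intersection number is a strictly positive count of intersection multiplicities.
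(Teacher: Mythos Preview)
Your deduction of upper triangularity from Theorem~\ref{thm:upperTriangularity} is exactly the intended argument, and your observation that unimodularity of the Poincar\'e pairing forces each diagonal entry to be $\pm 1$ is correct and matches what the paper does in the analogous general setting (see the proof of Corollary~\ref{cor:poincare}). In fact the paper is looser than you are here: its proof of Corollary~\ref{cor:poincare} simply asserts that perfectness gives $1$ on the diagonal, and only the subsequent Remark supplies the transversality argument that actually rules out $-1$.

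Where your proposal runs into trouble is the attempted fix for the sign. You invoke ``the underlying $\CC^\star$-action whose BB$^+$ sink closures are the cycles $\overline{T^\circ}$'', but the paper states explicitly at the end of Section~\ref{sec:esRevisited} that the Ellingsrud--Str\o mme decomposition is \emph{not} a Bia\l ynicki-Birula decomposition: no single weight $w$ produces all three factors simultaneously as BB cells. So there is no global BB$^-$ dual basis to compare $[\iota(t_i)]$ against in the way you suggest.

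The cleanest route to $+1$ goes factor by factor. The cell $(\Delta_2,\Delta_1,\Delta_0)^\circ$ is a product of three BB cells (for three different torus actions on the three ambient Hilbert schemes), and the same is true of $\iota(\Delta_2,\Delta_1,\Delta_0)^\circ$ after reversing the variables, as in Figure~\ref{fig:triples}. At the common fixed point $(M_{\Delta_2},M_{\Delta_1},M_{\Delta_0})$ the tangent space of $H^n(\P^2)$ splits as a direct sum of the three tangent spaces, and on each summand the two BB cells are genuinely opposite (positive versus negative weight spaces for the relevant one-parameter subgroup). Hence the Remark after Corollary~\ref{cor:poincare} applies on each factor: the two closures meet transversally at this single point, giving intersection number $+1$. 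Your alternative suggestion via proper intersection and positivity of effective cycles also works, but you would still need to verify that the intersection is zero-dimensional, and the factor-by-factor transversality is what establishes this.
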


\begin{ex}
  Figure \ref{fig:fourFour} shows the Hasse diagram of the basis $T^4$ of $\H^4(H^4(\P^2))$. 
  The matrix of the Poincar\'e pairing with respect to that basis and its transpose is a block matrix
  \[
    \left[\begin{array}{c}
      t_1 \\
      \vdots \\
      t_8 \\ \hline
      t_9 \\
      \vdots \\
      t_{13} 
    \end{array}\right]
    \cdot
    \left[\begin{array}{ccc|ccc}
      \iota(t_1) &
      \ldots &
      \iota(t_8) &
      \iota(t_9) &
      \ldots &
      \iota(t_{13})
    \end{array}\right]
    = 
    \left[\begin{array}{c|c}
      \begin{array}{ccc}
        1 & & \star \\
        & \ddots & \\
        0 & & 1
      \end{array} 
      & 0 \\ \hline
      0 & 
      \begin{array}{ccc}
        1 & & \star \\
        & \ddots & \\
        0 & & 1
      \end{array} 
    \end{array}\right] .
  \]
\end{ex}

\begin{center}
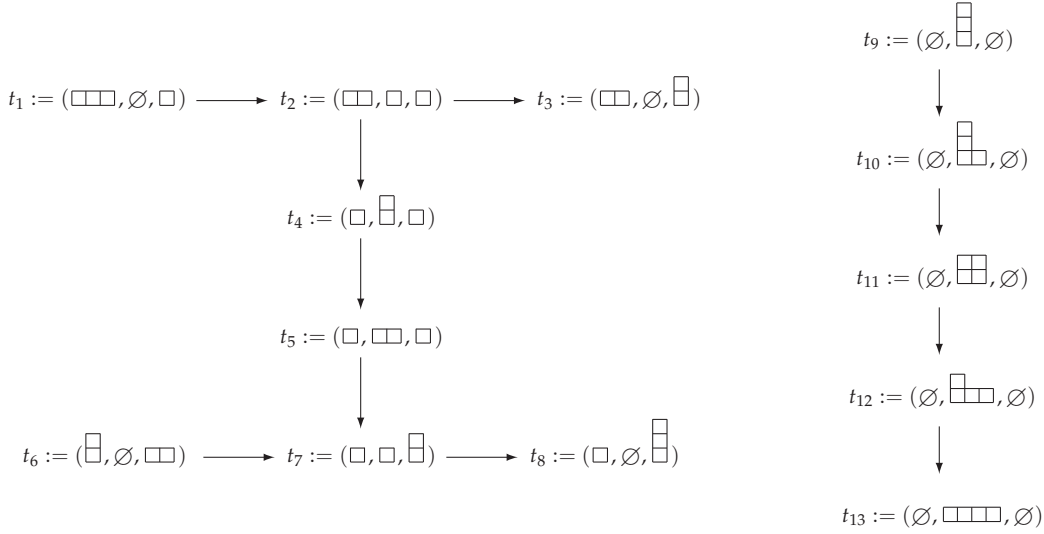
\begin{figure}[ht]
  \unitlength0.3166mm
  \begin{picture}(434,218)
    \put(0,175){\footnotesize$t_1 := (\pthree,\emptyset,\pone)$}
    \multiput(79,177)(3,-150){2}{\vector(1,0){30}}
    \put(114,175){\footnotesize$t_2 := (\ptwo,\pone,\pone)$}
    \multiput(187,177)(-3,-150){2}{\vector(1,0){30}}
    \multiput(148,169)(0,-50){3}{\vector(0,-1){30}}
    \put(222,175){\footnotesize$t_3 := (\ptwo,\emptyset,\poneone)$}
    \put(117,125){\footnotesize$t_4 := (\pone,\poneone,\pone)$}
    \put(114,75){\footnotesize$t_5 := (\pone,\ptwo,\pone)$}
    \put(6,25){\footnotesize$t_6 := (\poneone,\emptyset,\ptwo)$}
    \put(117,25){\footnotesize$t_7 := (\pone,\pone,\poneone)$}
    \put(219,25){\footnotesize$t_8 := (\pone,\emptyset,\poneoneone)$}
    \put(359,200){\footnotesize$t_9 := (\emptyset,\poneoneone,\emptyset)$}
    \multiput(391,190)(0,-50){4}{\vector(0,-1){20}}
    \put(356,150){\footnotesize$t_{10} := (\emptyset,\ptwooneone,\emptyset)$}
    \put(356,100){\footnotesize$t_{11} := (\emptyset,\ptwotwo,\emptyset)$}
    \put(353,50){\footnotesize$t_{12} := (\emptyset,\pthreeone,\emptyset)$}
    \put(350,0){\footnotesize$t_{13} := (\emptyset,\pfour,\emptyset)$}
  \end{picture}
\caption{The poset basis of $\H^4(H^4(\P^2))$}
\label{fig:fourFour}
\end{figure}
\end{center}

\subsection*{Outline of the paper}

Section \ref{sec:esRevisited} gives a short account of Ellingsrud and Str\o mme's BB cells in schemes 
$H^n(\A^2)$, $H^{n,\lin}(\A^2)$ and $H^{n,\punc}(\A^2)$. 
The emphasis of this section is on the identification of ideals in $S$ and triples of ideals in $S'$. 
We will conclude that section with the remark that Ellingsrud and Str\o mme's cellular decomposition of $H^n(\P^2)$
is not a BB decomposition. 
Section \ref{sec:bbTheory} provides the token from Bia\l ynicki-Birula theory necessary for the proof of Theorem \ref{thm:upperTriangularity}: 
Consider a complete complex variety $X$ equipped with an ample line bundle and a torus action with isolated fixed points, 
and the BB cells $X_v^\circ$ of points floating into $v$ from above and $X^w_\circ$ of points floating into $v$ from below. 
We will give a necessary condition in terms of the line bundle for $X_v^\circ$ to meet $X^w_\circ$. 
Hence also a necessary condition for the respective closures to meet. 
The main source of inspiration to that section was \cite{allenSimplicialComplexesBB}. 
Section \ref{sec:poincare} then applies the findings from Bia\l ynicki-Birula theory, 
plus some elementary observations from \cite{comboDuality}, to the scheme $H^n(\P^2)$, thus proving Theorem \ref{thm:upperTriangularity}. 
The Appendix contains a few clarifications about the partial orderings $\leq_\mu$, $\leq_\lambda$, $\leq_\nu$ 
as discussed in Example \ref{ex:posets}. 
Moreover, we will specify generic monomial ideals of given weights, 
a notion we use in the proof of Theorem \ref{thm:upperTriangularity}.

\subsection*{Acknowledgements}

I wish to thank Anthony Iarrobino and Bernd Sturmfels for giving me the opportunity to present preliminary versions of this work 
at Northwestern University at Boston and Max Planck Institut f\"ur Mathematik in Bonn, respectively. 
Bernd noted the analogy of my results to a statement in toric varieties, as presented in Example \ref{ex:bernd}. 
I then rewrote my paper according to Bernd's suggestions with the help of Allen Knutson, whom I thank for many very valuable remarks. 


\section{Ellingsrud and Str\o mme's cellular decomposition}
\label{sec:esRevisited}

We use the cellular decomposition of $\P^2 = \Proj(S)$ 
into $\A^2 = \P^2 \setminus \V(x_2)$, $\A^1 = \V(x_2) \setminus \{(1:0:0)\}$ and $\A^0 = \{(1:0:0)\}$. 
An ideal $I \in H^n(\P^2)$ can uniquely be written as $I = I_2 \cap I_1 \cap I_0$, where $\supp(I_j) \subseteq \A^j$. 
The individual ideals $I_j$ have constant Hilbert functions $n_j$ whose sum equals $n$. 
Hence the decomposition of $H^n(\P^2)$ into strata 
$H^{n_2}(\A^2) \times H^{n_1,\lin}(\A^2) \times H^{n_0,\punc}(\A^2)$ as presented in the Introduction. 
We identify ideals $I = I_2 \cap I_1 \cap I_0$ in $S$ with triples $(I_2,I_1,I_0)$ of ideals in $S'$ by de-homogenizing, 
\[
  I(x_0,x_1,x_1) \mapsto \bigl( I_2(y_1,y_2,1), I_1(y_1,1,y_2), I_0(1,y_1,y_2) \bigr) . 
\] 

The two-dimensional torus $\TT$ of diagonal matrices in $SL(3)$ acts on the polynomial ring $S$ by scaling the variables: 
If $\tt = (t_0,t_1,t_2) \in \TT$ then $\tt . x^\alpha := t_0^{\alpha_0}t_1^{\alpha_1}t_2^{\alpha_2}x^\alpha$. 
This translates into an action on $\P^2$ where $\tt.(a_0:a_1:a_2) = (t_0^{\alpha_0}a_0:t_1^{\alpha_1}a_1:t_2^{\alpha_2}a_2)$. 
The fixed points of this action are $(1:0:0)$, $(0:1:0)$ and $(0:0:1)$. 
The $\TT$-action on $S$ also induces an action on $H^n(\P^2)$ by $\tt . I = \langle t.f : f \in I \rangle$. 
This action respects the decomposition into strata $H^{n_2}(\A^2) \times H^{n_1,\lin}(\A^2) \times H^{n_0,\punc}(\A^2)$. 
In particular, the fixed points under this action are ideals of shape $I = M_{\Delta_2} \cap M_{\Delta_1} \cap M_{\Delta_0}$, 
where each $M_{\Delta_j} \subseteq S$ is a monomial ideal supported in $\A^j$. 
Equivalently, the fixed points are triples $(M_{\Delta_2}, M_{\Delta_1}, M_{\Delta_0})$ of monomial ideals $M_{\Delta_j} \subseteq S'$. 

The datum of a \defn{weight vector} $w := (w_0,w_1,w_2) \in \Z^3$ such that $w_0 + w_1 + w_2 = 0$
is equivalent to the datum of an embedding
\[
  T := \CC^\star \hookrightarrow \TT : t \mapsto (t^{w_0},t^{w_1},t^{w_2}) .
\]
We obtain induced actions of the one-dimensional torus $T$ on the geometric objects $\P^2$ and $H^n(\P^2)$. 
The action of $T$ on $H^n(\P^2)$ also respects its decomposition into strata $H^{n_2}(\A^2) \times H^{n_1,\lin}(\A^2) \times H^{n_0,\punc}(\A^2)$. 
If the weight vector is general enough, 
then the $T$-fixed points on $H^n(\P^2)$ are still triples $(M_{\Delta_2}, M_{\Delta_1}, M_{\Delta_0})$ of monomial ideals in $S'$. 
The BB strata of that action are therefore affine cells contained in $H^{n_2}(\A^2)$, $H^{n_1,\lin}(\A^2)$ and $H^{n_0,\punc}(\A^2)$, respectively. 
For determining a basis of the cohomology module of $H^n(\P^2)$, it suffices to find (possibly different) weight vectors 
such that we know the BB strata of the respective actions in $H^{n_2}(\A^2)$, $H^{n_1,\lin}(\A^2)$ and $H^{n_0,\punc}(\A^2)$ explicitly enough. 

\begin{pro}[cf. \cite{esBetti, esCells, huibregtseEllingsrud, hilbertBurchMatrices}]
\label{pro:BBCells}
  \begin{enumerate}[(i)]
  \item General weight vectors $w, w' \in \Z^3$ exist such that 
  \begin{itemize}
    \item $w_0 < w_1 < w_2$ and $w_0 + w_1 + w_2 = 0$, 
    \item the same holds for the primed vector, 
    \item $w_0-w_2 \ll w_1-w_2 < 0$, more precisely, $w_0-w_2 < n(w_1-w_2)$, 
    \item $w_0-w_1 < 0 < w_2-w_1$, 
    \item $w'_0-w'_1 < 0 < w'_2-w'_1$, and 
    \item $0 < w'_1-w'_0 \ll w'_2-w'_0$, more precisely, $w'_1-w'_0 < n(w'_2-w'_0)$. 
  \end{itemize}
  \item The weight $w$ defines an action $T \acts H^n(\P^2)$ with isolated fixed points 
  $(M_{\Delta_2},M_{\Delta_1},M_{\Delta_0})$ such that 
  the BB sink of points floating into $(M_{\Delta_2},M_{\Delta_1},\langle 1 \rangle)$ from above 
  is the subscheme $H^{\Delta_2}_\lex(\A^2) \times H^{\Delta_1,\lin}_\lex(\A^2) \times \emptyset$. 
  \item The weight $w'$ defines an action $T \acts H^n(\P^2)$ with isolated fixed points 
  $(M_{\Delta_2},M_{\Delta_1},M_{\Delta_0})$ such that 
  the BB sink of points floating into $(\langle 1 \rangle, M_{\Delta_1},M_{\Delta_0})$ from above 
  is the subscheme $\emptyset \times H^{\Delta_1,\lin}_\lex(\A^2) \times H^{\Delta_0,\punc}_\lex(\A^2)$. 
  \end{enumerate}
\end{pro}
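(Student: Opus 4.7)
For part (i), I would exhibit the weights and verify the inequalities by hand. Setting $w := (-2n-1,\, n-1,\, n+2)$ one has $w_0 + w_1 + w_2 = 0$, $w_0 < w_1 < w_2$, $w_0 - w_2 = -3n-3 < -3n = n(w_1 - w_2)$, and $w_0 - w_1 = -3n < 0 < 3 = w_2 - w_1$. A mirror construction, say $w' := (-n-2,\, -n+1,\, 2n+1)$, produces the primed inequalities in the same way, so part (i) reduces to elementary arithmetic.

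For part (ii), the plan proceeds in three steps. First, since $w_0 < w_1 < w_2$, the embedding $T \hookrightarrow \TT$ determined by $w$ is a generic one-parameter subgroup of $\TT$ acting on $\P^2$, and its Bia\l ynicki-Birula decomposition agrees with the cellular decomposition $\P^2 = \A^2 \coprod \A^1 \coprod \A^0$. Consequently, the induced $T$-action on $H^n(\P^2)$ preserves the stratification $\coprod H^{n_2}(\A^2) \times H^{n_1,\lin}(\A^2) \times H^{n_0,\punc}(\A^2)$ and restricts on each stratum to a product of three independent $T$-actions on the factors. Second, the fixed point $(M_{\Delta_2}, M_{\Delta_1}, \langle 1 \rangle)$ lives in the stratum with $n_0 = 0$; the closure relations of the stratification (points may migrate from $\A^2$ into $\A^1$ or from $\A^1 \cup \A^2$ into $\A^0$, but never the reverse) imply that no $T$-orbit in a stratum with $n_0 > 0$ can have this fixed point in its closure. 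Therefore the BB sink is contained in the stratum and factors as a product of BB sinks in $H^{n_2}(\A^2)$ and $H^{n_1,\lin}(\A^2)$, together with the singleton $\{\langle 1 \rangle\}$ for the trivial third factor. Third, each factor sink is to be identified with the appropriate lex Gr\"obner basin. On the chart $\{x_2 = 1\}$ the torus acts on $(y_1, y_2)$ with weights of the same sign whose absolute-value ratio exceeds $n$ (by the inequality $w_0 - w_2 < n(w_1 - w_2)$), so the flat limit of $t.I$ as $t \to 0$ agrees with the lex initial ideal, yielding $H^{\Delta_2}_\lex(\A^2)$ as in \cite{esBetti, huibregtseEllingsrud, hilbertBurchMatrices}. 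On the chart $\{x_1 = 1\}$ the induced weights on $(y_1, y_2)$ have opposite signs, but since every $I \in H^{n_1,\lin}(\A^2)$ contains a power of $y_2$, the flat limit on this subvariety still coincides with the lex initial ideal, yielding $H^{\Delta_1,\lin}_\lex(\A^2)$.

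For part (iii) the argument is symmetric: the weight $w'$ realizes the mirror sign pattern, so the fixed point $(\langle 1 \rangle, M_{\Delta_1}, M_{\Delta_0})$ sits in the stratum with $n_2 = 0$ and its BB sink splits as a product with trivial first factor. The inequality $w'_1 - w'_0 < n(w'_2 - w'_0)$ provides the required genericity on the chart $\{x_0 = 1\}$ to identify the sink in $H^{n_0,\punc}(\A^2)$ with $H^{\Delta_0,\punc}_\lex(\A^2)$ via the punctual analogue of \cite{esCells, huibregtseEllingsrud, hilbertBurchMatrices}. The main obstacle I anticipate is handling the mixed-sign torus action on the intermediate and punctual factors: the naive fiberwise limit of a torus orbit need not lie in the affine chart, so one must invoke the full Bia\l ynicki-Birula machinery on the compactification $H^n(\P^2)$ and track precisely which subvariety the flow lands in, which is exactly the substantive work carried out in the cited references.
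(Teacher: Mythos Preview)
Your overall architecture matches the paper's: reduce to a product of actions on the three factors, and identify each BB sink with a lexicographic Gr\"obner basin. Part (i) is fine, and the first factor in (ii) is handled the same way in both.

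The gap is in the mixed-sign factor. Your justification ``since every $I \in H^{n_1,\lin}(\A^2)$ contains a power of $y_2$, the flat limit on this subvariety still coincides with the lex initial ideal'' is not an argument: the presence of $y_2^k$ in $I$ does not by itself control the weight-initial terms of the \emph{other} elements of the reduced lex Gr\"obner basis. With the weight $(w_0-w_1,\,w_2-w_1)$ having opposite signs, a polynomial $f = y^\alpha + \sum a_\beta y^\beta$ in the reduced Gr\"obner basis could a priori carry a trailing term $a_\beta y^\beta$ with $\beta_2 < \alpha_2$, whose weight would then beat that of $y^\alpha$, so the weight-limit would \emph{not} be $M_{\Delta_1}$. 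What must actually be shown is the structural fact that no such terms occur when $I$ is supported on $\V(y_2)$. The paper does not defer this to the literature; it proves it directly by a sequence of one-parameter deformations with weight vectors $u$ rotating clockwise from $(-1,0)$ through the third quadrant, each time using that the limit must remain supported at the origin to kill the offending coefficients $a_\beta$. Your acknowledgment that this is ``the substantive work carried out in the cited references'' is honest, but it means your proposal is an outline rather than a proof at exactly the point where the paper supplies the content.

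The same issue recurs in (iii): for the punctual factor one needs in addition that no terms with $\beta_2 = \alpha_2$ and $\beta_1 < \alpha_1$ appear, and the paper again provides a direct deformation argument (this time with weight $(0,1)$) rather than a citation.
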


\begin{proof}
  (i) is elementary. 
  
  (ii) Under the identification of ideals in $S$ and triples of ideals in $S'$, 
  the action $T \acts H^n(\P^2)$ translates into three actions of $T$ on $H^{n_2}(\A^2)$, $H^{n_1}(\A^2)$ and $H^{n_0}(\A^2)$, 
  respectively, induced by actions 
  \[
    \begin{array}{cc}
      T \acts S' : 
      & t.y^\alpha = t^{\langle \alpha, (w_0 - w_2, w_1 - w_2) \rangle}y^\alpha , \\
      T \acts S' : 
      & t.y^\alpha = t^{\langle \alpha, (w_0 - w_1, w_2 - w_1) \rangle}y^\alpha , \\
      T \acts S' : 
      & t.y^\alpha = t^{\langle \alpha, (w_1 - w_0, w_2 - w_0) \rangle}y^\alpha ,
    \end{array}
  \]
  respectively. 
  Here it is important to note that the weight vector of the second action is such that $w_0 - w_1 < 0$ and $w_2 - w_1 > 0$, 
  to the effect that the BB sink of points floating into a fixed point $M_{\Delta_1}$ from above is contained in $H^{n_1,\lin}(\A^2)$; 
  and the weight vector of the third action is such that $w_0 - w_1 > 0$ and $w_2 - w_1 > 0$, 
  to the effect that the BB sink of points floating into a fixed point $M_{\Delta_0}$ from above is contained in $H^{n_1,\punc}(\A^2)$. 
  Both facts have been used in \cite{esBetti, esCells}, and are proved on a schematic level in \cite{bbAvecLaurent}. 
  However, for the time being, we will only be using the first and the second of the three actions; 
  we will return to the third action in Section \ref{sec:poincare}. 
  
  The first action sends each polynomial $f = \sum_{\alpha \in \N^2}a_\alpha y^\alpha \in S'$ to 
  $t.f = \sum_{\alpha \in \N^2}a_\alpha t^{\langle (w_0 - w_2, w_1 - w_2),\alpha \rangle}y^\alpha$. 
  Let $a_\beta y^\beta$ be the \defn{weight-initial term of $f$}, i.e., 
  the term for which the product $\langle (w_0 - w_2, w_1 - w_2),\beta \rangle$ is minimal. 
  Our choice of $(w_0 - w_2, w_1 - w_2)$ immediately shows that the weight-initial term of $f$ is its lex-initial term. 
  Under the $T$ action on $H^{n_2}(\A^2)$, an ideal $I_2$ is sent to 
  \[
    t.I_2 = \left\langle \frac{t.f}{t^{\langle (w_0 - w_2, w_1 - w_2),\beta \rangle}} 
    = \sum_{\alpha \in \N^2}a_\alpha t^{\langle (w_0 - w_2, w_1 - w_2),\alpha - \beta \rangle}y^\alpha : f \in I \right\rangle
  \]
  In the limit as $t \to 0$, all summands $a_\alpha t^{\langle (w_0 - w_2, w_1 - w_2),\alpha - \beta \rangle}$ for which 
  $\langle (w_0 - w_2, w_1 - w_2),\alpha \rangle < \langle (w_0 - w_2, w_1 - w_2),\beta \rangle$ get killed. 
  Since these terms are the lex-trailing terms, 
  the limit is the lexicographic Gr\"obner deformation. 
  The BB sink of points $I_2 \in H^{n_2}(\A^2)$ floating into fixed point $M_{\Delta_2}$ therefore equals $H^{\Delta_2}_\lex(\A^2)$. 
  
  The second action sends each polynomial $f$ to 
  $t.f = \sum_{\alpha \in \N^2}a_\alpha t^{\langle (w_0 - w_1, w_2 - w_1),\alpha \rangle}y^\alpha$. 
  The weight-initial term of $f$ is now the summand $a_\beta y^\beta$
  for which the product $\langle (w_0 - w_1, w_2 - w_1),\beta \rangle$ is minimal. 
  Given any $f \in S'$, its weight-initial term is clearly not its lex-initial term. 
  We claim that if $f$ is an element of the reduced lexicographic Gr\"obner basis of an ideal $I_1 \in H^{n_1,\lin}(\A^2)$, then it is. 
  Once this claim is proved, the same arguments as before show that
  the BB sink of points $I_1 \in H^{n_1,\lin}(\A^2)$ floating into $M_{\Delta_1}$ equals $H^{\Delta_1,\lin}_\lex(\A^2)$. 
  
  Let us prove our claim in the most direct way, without reference to \cite{esCells, huibregtseEllingsrud, hilbertBurchMatrices}. 
  It suffices to show that if $\IN(f) = y^\alpha$, then $f$ contains no term $a_\beta y^\beta$ such that $\beta_2 < \alpha_2$. 
  We establish this by applying a series of deformations to $I_1$. 
  Each will be a limit $\lim_{t \to 0} t.I_1$ under a torus action induced from $T \acts S'$ with a weight vector $u \in \Z^2$. 
  
  The first action has weight vector $u := (-1,0)$. 
  Since $I_1$ is supported on the $y_1$-axis, 
  the deformation $\lim_{t \to 0} t.I_1$ is an ideal defining a point in $H^{n_1,\punc}(\A^2)$ invariant under this torus action. 
  However, the weight $u$ was chosen such that each $f = y^\alpha + \sum_{\beta \in \Delta_1, \beta < \alpha} a_\beta y^\beta$
  appearing in the reduced lexicographic Gr\"obner basis of $I_1$ deforms to 
  $\lim_{t \to 0}t.f = y^\alpha + \sum_{\beta_1 = \alpha_1, \beta_2 < \alpha_2} a_\beta y^\beta$. 
  The limits of the polynomials $f$ form the reduced lexicographic Gr\"obner basis of the limiting ideal. 
  The ideal spanned by them can only be supported at the origin if $a_\beta = 0$
  for all $\beta$ such that $\beta_1 = \alpha_1$ and $\beta_2 < \alpha_2$. 
  
  For defining the next action, we turn the weight vector $(-1,0)$ clockwise and rescale it 
  so as to obtain $u \in \Z^2$ pointing slightly upward into the third quadrant of the plane. 
  We stop turning when we first find an element from the reduced lexicographic Gr\"obner basis of $I_1$ 
  having initial term $y^\alpha$ and a trailing term $a_\beta y^\beta$ such that $\langle u, \alpha - \beta \rangle = 0$. 
  The deformation $\lim_{t \to 0} t.I_1$ then defines a point in $H^{n_1,\punc}(\A^2)$ invariant under this torus action. 
  The reduced lexicographic Gr\"obner basis of the limiting ideal is formed by polynomials $\lim_{t \to 0} t.f$, 
  where $f$ runs through the reduced lexicographic Gr\"obner basis of $I_1$. 
  Once more we see that the ideal spanned by these polynomials can only be supported at the origin if $a_\beta = 0$
  for all $\beta$ such that $\langle u, \alpha - \beta \rangle = 0$. 
  
  We keep turning $u$ clockwise and deforming $I_1$, 
  thus showing that more and more coefficients $a_\beta$ vanish. 
  The arguments remain valid as long as $u$ stays in the interior of third quadrant. 
  This finishes the proof of the claim. 
  
  (iii) The action $T \acts H^n(\P^2)$ translates into actions on the three types of Hilbert schemes induced by 
  \[
    \begin{array}{cc}
      T \acts S' : 
      & t.y^\alpha = t^{\langle \alpha, (w'_0 - w'_2, w'_1 - w'_2) \rangle}y^\alpha , \\
      T \acts S' : 
      & t.y^\alpha = t^{\langle \alpha, (w'_0 - w'_1, w'_2 - w'_1) \rangle}y^\alpha , \\
      T \acts S' : 
      & t.y^\alpha = t^{\langle \alpha, (w'_1 - w'_0, w'_2 - w'_0) \rangle}y^\alpha ,
    \end{array}
  \]
  respectively. 
  Only the last two actions are relevant in (ii). 
  The second action here is identical to the second action in (ii). 
  The BB sink of points $I_1 \in H^{n_1,\lin}(\A^2)$ floating into $M_{\Delta_1}$ therefore equals $H^{\Delta_1,\lin}_\lex(\A^2)$. 
  
  For proving that the BB sink of points $I_0 \in H^{n_0,\punc}(\A^2)$ 
  floating into $M_{\Delta_0}$ equals $H^{\Delta_0,\punc}_\lex(\A^2)$, 
  we show that if $f$ is an element of the reduced lexicographic Gr\"obner basis of an ideal $I_0 \in H^{n_0,\punc}(\A^2)$, 
  then its initial term with respect to the weight $(w'_1 - w'_0, w'_2 - w'_0)$ is at the same time its lex-initial term. 
  It suffices to show that if $\IN(f) = y^\alpha$, then $f$ contains no term $a_\beta y^\beta$ such that $\beta_2 < \alpha_2$
  or $\beta_2 = \alpha_2$ and $\beta_1 < \alpha_1$.
  
  The non-existence of $a_\beta y^\beta$ such that $\beta_2 < \alpha_2$ follows from the fact that the ideal $I_0$ 
  also defines a point in $H^{n_0,\lin}(\A^2)$. 
  For showing the non-existence of $a_\beta y^\beta$ such that $\beta_2 = \alpha_2$ and $\beta_1 < \alpha_1$, 
  we consider the torus action on $H^{n_0,\punc}(\A^2)$ induced from $T \acts S'$ with weight vector $u := (0,1)$. 
  Since $I_0$ is supported at the origin, 
  the deformation $\lim_{t \to 0} t.I_0$ is an ideal defining a point in $H^{n_0,\punc}(\A^2)$ invariant under this torus action.
  The polynomials $\lim_{t \to 0}t f$, for all $f$ from the reduced lexicographic Gr\"obner basis of $I_0$, 
  form the reduced lexicographic Gr\"obner basis of the limiting ideal. 
  The ideal spanned by them can only be supported at the origin if $a_\beta = 0$
  for all $\beta$ such that $\beta_2 = \alpha_2$ and $\beta_1 < \alpha_1$. 
\end{proof}

It's not possible to impose the conditions from Proposition \ref{pro:BBCells} (i) for one and the same weight vector $w = w'$. 
This is why Ellingsrud and Str\o mme's cellular decomposition is not a BB decomposition. 
The next two sections will be dedicated to a somewhat more sophisticated application of Bia\l ynicki-Birula theory to our setting. 



\section{Some complements on Bia\l ynicki-Birula theory}
\label{sec:bbTheory}

The following proposition was communicated to the author by Allen Knutson. 

\begin{pro}
\label{pro:allenBB}
  Let $X$ be a complete complex variety with an ample line bundle $\L$ 
  and $T \acts X$ an action with isolated fixed points. 
  We denote by 
  \begin{equation*}
  \begin{split}
    X_v^\circ & := \{ x \in X : \lim_{t \to 0} t .x = v \} , \\
    X^w_\circ & := \{ x \in X : \lim_{t \to \infty} t .x = w \}
  \end{split}
  \end{equation*}
  the BB cells of points flowing into $v$ and $w$ from above and below, respectively, 
  and by $\Phi(v)$ the $T$-weight of $\L |_v$. 
  If $X_v^\circ \cap X^w_\circ$ is nonempty and $v \neq w$, then $\Phi(v) < \Phi(w)$.
\end{pro}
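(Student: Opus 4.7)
The plan is to exhibit the desired monotonicity by embedding $X$ equivariantly into projective space and reading off the weights $\Phi(v)$ and $\Phi(w)$ from the $T$-weight decomposition of a representative of the orbit through a point of the intersection. At the extremes $t \to 0$ and $t \to \infty$ the orbit converges to the minimal-weight and maximal-weight components, and since $v \neq w$ these two components lie in distinct weight spaces, which will force the strict inequality.

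First I would replace $\L$ by a sufficiently large tensor power $\L^{\otimes N}$ so that it is very ample and its $T$-equivariant structure (implicit in the definition of $\Phi$) extends to a $T$-linearization of the complete linear system --- both steps are standard for complete $X$, and they only rescale $\Phi$ by $N$. This produces a finite-dimensional $T$-representation $V$ and a $T$-equivariant closed immersion $\iota : X \hookrightarrow \P(V)$ with $\iota^{*}\O_{\P(V)}(1) = \L^{\otimes N}$. Decomposing $V = \bigoplus_{\chi} V_{\chi}$ into weight spaces, any $T$-fixed point $p \in X$ maps to a $T$-fixed point of $\P(V)$, hence lies in some $\P(V_{\chi(p)})$ for a unique character $\chi(p)$, and the identification $\L^{\otimes N}|_{p} = \O_{\P(V)}(1)|_{\iota(p)}$ shows that the weight $N \Phi(p)$ is an affine-linear function of $\chi(p)$ determined by the chosen linearization.

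The substantive step is then the standard identity
\[
  \lim_{t \to 0} t.[u] = [u_{\chi_{\min}}] \quad \text{and} \quad \lim_{t \to \infty} t.[u] = [u_{\chi_{\max}}],
\]
valid for any $u = \sum_{\chi} u_{\chi} \in V$, where $\chi_{\min}$ and $\chi_{\max}$ are the smallest and largest weights with $u_{\chi} \neq 0$; this is proved by factoring out the appropriate monomial in $t$ from the sum. Applied to a representative $u$ of $\iota(x)$ for any $x \in X_v^\circ \cap X^w_\circ$, the hypotheses force $\iota(v) = [u_{\chi_{\min}}]$ and $\iota(w) = [u_{\chi_{\max}}]$, whence $\chi(v) = \chi_{\min}$ and $\chi(w) = \chi_{\max}$. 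Because $v \neq w$ gives $\iota(v) \neq \iota(w)$, the vector $u$ must have nonzero components in at least two weight spaces, so $\chi_{\min} < \chi_{\max}$ strictly; translating through the linearization then yields $\Phi(v) < \Phi(w)$.

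The only genuine obstacle I anticipate is pure bookkeeping of sign and scale conventions --- which side of $\P(V)$ versus $\P(V^{*})$ one takes, and whether the action on $\O(1)|_{\iota(p)}$ is normalized to agree with the action on $\L|_{p}$ or with its dual --- so that the strict inequality comes out in the direction claimed rather than the reverse. The existence of a very ample, $T$-linearized power of $\L$ on a complete variety is classical (cf.\ Mumford--Fogarty--Kirwan), and everything past the setup is formal linear algebra in a finite-dimensional $T$-representation.
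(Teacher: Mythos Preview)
Your argument is correct and is a well-known alternative to the paper's approach; the sign bookkeeping you flag is indeed the only thing to watch, and it resolves in the claimed direction once one fixes $\O(1)|_{[u_\chi]}$ to have weight $-\chi$ (dual to the tautological line), so that $\chi_{\min}<\chi_{\max}$ becomes $\Phi(v)<\Phi(w)$.

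The paper proceeds differently and more intrinsically: rather than passing to a very ample power and embedding in projective space, it takes a point $a\in X_v^\circ\cap X^w_\circ$, extends the orbit map $T\to X$ to $g:\P^1\to X$ by completeness, and pulls $\L$ back along $g$. The degree of $g^*\L$ equals $\Phi(w)-\Phi(v)$ by the classification of equivariant line bundles on $\P^1$, and it is positive because $\L$ is ample and the image of $g$ is a genuine curve (here $v\neq w$). This avoids invoking existence of a linearized very ample power and works directly with $\L$; it also dovetails with the paper's subsequent Lemma on closures, which reuses the same orbit-curve-through-a-point device. Your approach, by contrast, reduces everything to linear algebra in a finite-dimensional $T$-module and is closer in spirit to moment-map or GIT arguments; it trades a small amount of setup (Sumihiro/MFK for the equivariant embedding) for a very transparent endgame.
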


\begin{proof}
  This statement and its proof are the same spirit as \cite[Proposition 2.1]{allenSimplicialComplexesBB}. 
  A general point $a \in X$ defines a morphism $f: T \to X: t \mapsto t.a$. 
  Since the $T$-action is not assumed to be faithful, the map $f$ is generically $k : 1$ for some $k \geq 1$. 
  The projective line $\P^1$ is a compactification of $T$; 
  if $a \in X_v^\circ \cap X^w_\circ$, then the morphism $f$ extends to $g: \P^1 \to X$, 
  sending $0$ and $\infty$ to $v$ and $w$, respectively.
  Completeness of $X$ implies that the extension of $f$ is unique. 
  
  The pullback of $\L$ along $g$ is a line bundle $\L'$ on $\P^1$ 
  whose only poles and zeros are found at 0 and $\infty$. 
  Their respective degrees are $\Phi(v)$ and $\Phi(w)$, and 
  \[
    \deg(\L') = \Phi(w) - \Phi(v) .
  \]
  The pushforward of $\L'$ along $g$ 
  shows that the degree of $\L'$ equals $k$ times the degree of the restriction of $\L$ to the image of $g$. 
  The line bundle $\L$ being ample, it has a positive degree, as does its restriction to the image of $g$. 
  Thus the inequality $\Phi(w) - \Phi(v) > 0$ follows. 
\end{proof}

\begin{lmm}
\label{lmm:closure}
  In the setting of Proposition \ref{pro:allenBB}, let $a \in X_v := \overline{X_v^\circ}$ and $p := \lim_{t \to 0} t. a$. 
  If $v \neq p$, then $\Phi(v) < \Phi(p)$. 
\end{lmm}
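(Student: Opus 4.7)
The plan is to adapt the strategy of Proposition~\ref{pro:allenBB}: build a $T$-equivariantly oriented chain of $T$-invariant rational curves in $X$ running from $v$ to $p$, and apply Proposition~\ref{pro:allenBB} link by link.

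Since $a \in \overline{X_v^\circ}$, I first pick a smooth curve germ $\gamma : (D, 0) \to (X_v, a)$ with $\gamma(s) \in X_v^\circ$ for $s \neq 0$. The $T$-equivariant morphism $F : \CC^\star \times D \to X$, $(t, s) \mapsto t \cdot \gamma(s)$, compactifies to a rational map $\bar F : \P^1 \times D \dashrightarrow X$ whose only indeterminacy lies at the $T$-fixed point $(0, 0)$: on $\{0\} \times (D \setminus \{0\})$ the map is constantly $v$, whereas the limit of $\bar F(t, 0) = t \cdot a$ as $t \to 0$ is $p \neq v$. Since $X$ is complete, a finite sequence of blowups at $T$-fixed points resolves $\bar F$ to a $T$-equivariant morphism $\tilde F : Y \to X$ from a smooth surface $Y$. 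The exceptional preimage $E$ of $(0, 0)$ is a connected tree of $T$-invariant $\P^1$s, and the strict transforms of $\{0\} \times D$ and $\P^1 \times \{0\}$ meet $E$ at two distinguished $T$-fixed points which $\tilde F$ sends to $v$ and $p$ respectively.

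Because the $T$-action on $\P^1 \times D$ flows from $\{t = 0\}$ to $\{t = \infty\}$ and this flow is preserved under every blowup at a $T$-fixed point, the chain of irreducible components of $E$ joining the two distinguished extremities acquires a coherent $T$-orientation: the $v$-extremity is the sink end of the chain and the $p$-extremity is its source end. Each link of the chain is either a $T$-invariant $\P^1$ mapped onto a $1$-dimensional $T$-orbit closure in $X$ or a component contracted by $\tilde F$ to a single fixed point. Applying Proposition~\ref{pro:allenBB} link by link and telescoping gives $\Phi(v) \leq \Phi(p)$, with strict inequality because $\tilde F(E)$ is connected and contains the distinct points $v$ and $p$, so at least one link must map non-constantly and contribute a strictly positive pullback degree of $\L$.

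The hardest step I expect is verifying, blowup by blowup, that the $T$-orientation on $E$ really aligns so that the sink of each link sits on the $v$-side of the chain. This is a careful but elementary bookkeeping exercise: at each blowup of a $T$-fixed point, the new exceptional $\P^1$ inherits a $T$-action whose sink lies on the side of the already-existing sink of the ambient flow on the surface, so the chain orientation is preserved inductively.
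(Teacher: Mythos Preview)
Your approach is correct and, in spirit, is exactly what lies behind the reference the paper invokes. The paper does not give a self-contained argument: it simply cites \cite[Corollary 2.1]{allenSimplicialComplexesBB} (stated there for $\O(1)$) and observes that the proof, which proceeds by pulling the line bundle back to $\P^1$'s mapping into $X_v$, goes through verbatim for any ample $\L$. Your blowup argument is a concrete realization of that same idea: you manufacture a connected $T$-equivariant chain of rational curves in $X$ from $v$ to $p$ and feed each link into Proposition~\ref{pro:allenBB}.

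One remark on the step you flagged as hardest. The inductive orientation check on the exceptional tree is genuinely a bit fiddly in local coordinates, and it can be bypassed. Note that $X_v$ is itself complete and $T$-stable, so its own BB decomposition (for the $t\to 0$ flow) has a unique open cell, namely $X_v^\circ$, with sink $v$; every other fixed point $p'$ occurring as a $t\to 0$ limit of some $a'\in X_v$ therefore sits in the boundary $X_v\setminus X_v^\circ$. Now run the argument of Proposition~\ref{pro:allenBB} inside $X_v$: choose a general $a'\in X_v^\circ$ specializing to the cell $X_{v,p'}^\circ\subset X_v$ (the locus in $X_v$ flowing to $p'$), and use the resulting $\P^1\to X_v$ with endpoints $v$ and some fixed point $q$ in $\overline{X_{v,p'}^\circ}$. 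Iterating this along a descending chain of boundary strata of $X_v$ reaches $p$ in finitely many steps, each contributing a strict inequality $\Phi(\cdot)<\Phi(\cdot)$, with no blowup bookkeeping required. This is closer to how \cite{allenSimplicialComplexesBB} organizes the induction, but your version is equally valid.
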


\begin{proof}
  The line bundle $\L$ pulls back from $X$ to the complete variety $X_v$. 
  In the special case $\L = \O(1)$, the statement of the lemma is part of \cite[Corollary 2.1]{allenSimplicialComplexesBB}. 
  The proof of the cited corollary is based on \cite[Lemma 2.1]{allenSimplicialComplexesBB}, 
  which states a number of facts about the line bundle $\O(1)$ on $X_v$. 
  These statements are proved by pulling back the line bundle to $\P^1$ 
  by a morphism $g : \P^1 \to \P^1$ as in the proof of Proposition \ref{pro:allenBB}. 
  The proof of the lemma is based on the classification of equivariant line bundles on $\P^1$, 
  which is given by the degree and the torus weight on the fiber over 0. 
  This classification, however, doesn't care if the line bundle on $\P^1$ 
  is the pullback of $\O(1)$ or of a more general $\L$. 
  The proof of \cite[Corollary 2.1]{allenSimplicialComplexesBB} 
  therefore works for an arbitrary line bundle $\L$ on $X_v$. 
\end{proof}

\begin{thm}
\label{thm:intersectionOfClosures}
  In the setting of Proposition \ref{pro:allenBB}, $X_v \cap X^w \neq \emptyset$ only if $v = w$ or $\Phi(v) < \Phi(w)$. 
\end{thm}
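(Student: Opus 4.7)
The plan is to combine Lemma \ref{lmm:closure}, a sink--source mirror of it, and Proposition \ref{pro:allenBB} itself. Given any $a \in X_v \cap X^w$, I form the two limits
\[
  p := \lim_{t \to 0} t.a, \qquad q := \lim_{t \to \infty} t.a ,
\]
both of which are fixed points of $T \acts X$. Lemma \ref{lmm:closure} applied directly to $a \in X_v$ yields $\Phi(v) \leq \Phi(p)$, with equality iff $v = p$.

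Next I would establish the analogous inequality $\Phi(q) \leq \Phi(w)$, with equality iff $q = w$, by reparametrizing the action as $t \star x := t^{-1}.x$. The reparametrized action has the same fixed points, swaps each BB sink with the corresponding BB source (so $X^w$ becomes the sink closure for $w$), negates every fiber weight $\Phi \mapsto -\Phi$, and preserves ampleness of $\L$. Feeding $a$ into Lemma \ref{lmm:closure} in this twisted setup and then undoing the sign flip delivers the bound.

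To chain $\Phi(p)$ to $\Phi(q)$ I would invoke Proposition \ref{pro:allenBB} on the $T$-orbit through $a$. If $a$ is itself a fixed point then $p = q = a$ and $\Phi(p) = \Phi(q)$. Otherwise the orbit is one-dimensional, so the degree computation used to prove Proposition \ref{pro:allenBB} applies verbatim and delivers $\Phi(p) < \Phi(q)$ --- it also forbids the coincidence $p = q$, since ampleness of $\L$ forces the pullback to $\P^1$ to have strictly positive degree. Combining the three comparisons gives
\[
  \Phi(v) \leq \Phi(p) \leq \Phi(q) \leq \Phi(w) .
\]

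To close the argument, I would observe that if $v = w$ the theorem's conclusion is satisfied, and if $v \neq w$ then $v = p = q = w$ is impossible, so at least one of the three inequalities above must be strict, yielding $\Phi(v) < \Phi(w)$. The one step that wants care is the sink--source mirror of Lemma \ref{lmm:closure}: one must check that the reparametrization $t \mapsto t^{-1}$ legitimately transports both the hypotheses and the conclusion of that lemma, and that the sign flip on $\Phi$ reverses the final inequality in the correct direction. Beyond this bookkeeping, I anticipate no serious obstacle.
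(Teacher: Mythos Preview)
Your proposal is correct and matches the paper's own proof essentially step for step: the paper also takes $a \in X_v \cap X^w$, forms $p = \lim_{t\to 0} t.a$ and $q = \lim_{t\to\infty} t.a$, then chains Lemma \ref{lmm:closure}, its $t\to\infty$ analogue, and Proposition \ref{pro:allenBB} (applied to $a \in X_p^\circ \cap X^q_\circ$) to obtain $\Phi(v) \leq \Phi(p) \leq \Phi(q) \leq \Phi(w)$ with strictness whenever two of the fixed points differ. Your explicit justification of the mirror lemma via the reparametrization $t \mapsto t^{-1}$ is exactly what the paper leaves implicit in the phrase ``its analogue for limits as $t \to \infty$''.
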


\begin{proof}
  Consider a point $a \in X_v \cap X^w$ and its limits 
  $p := \lim_{t \to 0} t.a$ and $q := \lim_{t \to \infty} t.a$. 
  Lemma \ref{lmm:closure}, its analogue for limits as $t \to \infty$, and Proposition \ref{pro:allenBB} show that 
  \[
    \Phi(v) \leq \Phi(p) \leq \Phi(q) \leq \Phi(w) , 
  \]
  with strict inequality if any two of the fixed points don't agree.   
\end{proof}

\begin{cor}
\label{cor:poincare}
  In the situation of Theorem \ref{thm:intersectionOfClosures}, 
  pick all $X_v$ of a given dimension $m-k$, where $m = \dim(X)$ thus obtaining a basis of $H^k(X)$, 
  and pick all $X^w$ of dimension $k$, thus obtaining a basis of $H^{m-k}(X)$. 
  Then the matrix of the Poincar\'e pairing $H^k(X) \times H^{m-k}(X) \to \Z$ with respect to the two bases, 
  ordered in the weight-increasing way, is upper triangular with 1s on the diagonal. 
\end{cor}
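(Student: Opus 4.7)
The plan is to realize the Poincar\'e pairing $\langle [X_v], [X^w] \rangle$ as an intersection number and to use Theorem \ref{thm:intersectionOfClosures} to locate the nonzero entries of the pairing matrix. The fact that the closures $\{X_v : \dim X_v = m-k\}$ form a basis of $H^k(X)$, and likewise $\{X^w : \dim X^w = k\}$ a basis of $H^{m-k}(X)$, is standard: each of the two BB stratifications (for $T$ and for its inverse) is a cellular decomposition of the complete variety $X$, and the closures of the codimension-$k$ cells give a basis of $H^k$ by \cite[Example 1.9.1]{fultonIntersectionTheory}.

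For off-diagonal entries I intend to invoke Theorem \ref{thm:intersectionOfClosures} directly: if $v \neq w$ and $\Phi(v) \geq \Phi(w)$ then $X_v \cap X^w = \emptyset$, so $\langle [X_v], [X^w] \rangle = 0$. Ordering both bases so that $\Phi$ is non-decreasing places every such vanishing entry below the diagonal, making the pairing matrix upper triangular.

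For the diagonal entries I plan to establish $\langle [X_v], [X^v] \rangle = 1$ by showing that $X_v \cap X^v = \{v\}$ set-theoretically and that the intersection at $v$ is transverse. To see the set-theoretic equality I would take any $a \in X_v \cap X^v$ and set $p := \lim_{t \to 0} t.a$, $q := \lim_{t \to \infty} t.a$; applying Lemma \ref{lmm:closure} to $a \in X_v$, its $t \to \infty$ analogue to $a \in X^v$, and Proposition \ref{pro:allenBB} to $a \in X_p^\circ \cap X^q_\circ$ produces the chain $\Phi(v) \leq \Phi(p) \leq \Phi(q) \leq \Phi(v)$, which collapses to equalities and forces $p = q = v$, hence $a \in X_v^\circ \cap X^v_\circ$. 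If $a \neq v$, the orbit map $t \mapsto t.a$ extends to a non-constant morphism $g : \P^1 \to X$ with $g(0) = g(\infty) = v$; then $\deg(g^* \L) = \Phi(v) - \Phi(v) = 0$, contradicting the positivity of $\deg(g^* \L)$ that ampleness of $\L$ forces on any non-constant image. Transversality at $v$ will then be immediate from the decomposition $T_v X = T_v X_v^\circ \oplus T_v X^v_\circ$ into the positive- and negative-weight subspaces under $T$, a decomposition ensured by $v$ being an isolated fixed point.

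The main obstacle I anticipate is the set-theoretic identity $X_v \cap X^v = \{v\}$, i.e., ruling out excess intersection points away from the fixed point; the chain-of-weights argument handles this and leans crucially on the ampleness of $\L$ through Proposition \ref{pro:allenBB}. Everything else is formal bookkeeping in intersection theory.
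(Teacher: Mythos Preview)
Your proposal is correct. The off-diagonal part and the basis claim match the paper verbatim. For the diagonal entries the paper's proof is quicker: once the matrix is upper triangular, perfectness of the Poincar\'e pairing forces it to be invertible over $\Z$, so the diagonal entries are units (and, with a little more care, equal to $1$). Your route---showing $X_v \cap X^v = \{v\}$ via the chain $\Phi(v) \leq \Phi(p) \leq \Phi(q) \leq \Phi(v)$ and then checking transversality through the weight-space decomposition $T_v X = T_v^+ X \oplus T_v^- X$---is exactly the geometric argument the paper gives in the Remark immediately following the corollary, where it cites \cite[Theorems 4.1, 4.2, 4.4]{bialynickiBirula}. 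So you have reproduced the paper's second, more hands-on explanation rather than its first; the trade-off is that the perfectness argument is one line but leaves the sign of the diagonal entries slightly under the rug, whereas your transversality computation nails down $+1$ directly and also clarifies why no excess intersection occurs.
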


\begin{proof}
  Completeness of $X$ implies that the Bia\l ynicki-Birula strata $X_v^\circ$ 
  with $v$ running through the set of fixed points, form a cellular decomposition of $X$ \cite[Theorem 4.4]{bialynickiBirula}, 
  as do the strata $X^w_\circ$. 
  The cohomology classes of their closures are therefore a basis of the module $H^\star(X)$. 
  The two bases of $H^k(X)$ and $H^{m-k}(X)$, respectively, are obtained by picking elements from the appropriate graded pieces. 
  Upper triangularity of the matrix follows from Theorem \ref{thm:intersectionOfClosures}. 
  Since the Poincar\'e pairing is perfect, the diagonal entries of the matrix are 1. 
\end{proof}

\begin{rmk}
  Another statement from Bia\l ynicki-Birula's article allows to understand the 1s on the diagonal in a very direct way. 
  The tangent spaces $T_v^+(X) := T_v(X_v^\circ)$ and $T_v^-(X) := T_w(X^v_\circ)$ are called the 
  \defn{positive} and \defn{negative weight spaces}, respectively. 
  In the setting of Proposition \ref{pro:allenBB}, $\Phi(v) = \Phi(w)$ if, and only if, $v = w$, 
  in which case the tangent space of $X$ at $v$ decomposes into 
  \[
    T_v(X) = T_v^+(X) \oplus T_v^-(X) 
  \]
  by \cite[Theorems 4.1 and 4.4]{bialynickiBirula}. 
  In other words, the closures $X_v$ and $X^v$ meet transversally in vertex $v$. 
  Since the proof of \cite[Theorem 4.2]{bialynickiBirula} also shows that $v$ is the only point in $X_v \cap X^v$, 
  the matrix of the Poincar\'e pairing has a 1 in row $X_v$ and column $X^v$.
\end{rmk}

The author thanks Bernd Sturmfels for communicating the following example
and generously providing a folded napkin akin to the polytope from Figure \ref{fig:polytope}. 
The reason why this example is discussed in so much detail here 
is its role as a model case for the Hilbert scheme of points in the projective plane, 
to be discussed in the next section. 

\begin{center}
\begin{figure}[ht]
  \begin{picture}(170,200)
    \put(40,10){\line(-1,3){30}}
    \put(40,10){\line(4,1){40}}
    \put(80,20){\line(1,1){40}}
    \put(120,60){\line(0,1){60}}
    \put(10,100){\line(1,2){43.4}}
    \put(120,120){\line(-1,1){66.8}}
    \put(35,2){\footnotesize $v_0$}
    \put(80,12){\footnotesize $v_1$}
    \put(124,55){\footnotesize $v_2$}
    \put(-2,99){\footnotesize $v_3$}
    \put(124,120){\footnotesize $v_4$}
    \put(50,192){\footnotesize $v_5$}
    \multiput(80,20)(-4,0){7}{\line(-1,0){2}}
    \multiput(120,60)(-4,0){7}{\line(-1,0){2}}
    \multiput(120,120)(-4,0){7}{\line(-1,0){2}}
    \multiput(10,100)(4,0){7}{\line(1,0){2}}
    \put(160,100){\vector(0,-1){40}}
    \put(90,160){\footnotesize $P \subseteq M \otimes_\Z \R$}
    \put(170,80){\footnotesize $\lambda \in M^\star$}
    \color{red}
    \thicklines
    \put(10,100){\qbezier(15,0)(15,9)(6.71,13.42)
    \line(1,2){6.8}}
    \put(26,110){\footnotesize $(X_P)_{v_3}^\circ$}
    \put(78,70){\footnotesize $(X_P)_{v_2}^\circ$}
    \put(40,10){\qbezier(-2.37,7.115)(5,8.5)(7.275,1.82)
    \line(-1,3){2.4}}
    \put(40,10){\line(4,1){7.45}}
    \put(80,20){\qbezier(-15,0)(-15,11)(-5.5,14.5)
      \qbezier(-5.5,14.5)(2.1,17.3)(10.61,10.61)
      \line(1,1){10.8}
      }
    \put(120,120){\qbezier(-15,0)(-15,6)(-10.61,10.61)
    \line(-1,1){10.8}}
    \put(120,60){\qbezier(-15,0)(-15,15)(0,15)
    \line(0,1){15}}
    \color{blue}
    \put(10,100){\qbezier(15,0)(14,-11)(4.74,-14.23)
    \line(1,-3){4.8}}
    \put(26,85){\footnotesize $(X_P)^{v_3}_\circ$}
    \put(78,45){\footnotesize $(X_P)^{v_2}_\circ$}
    \put(80,20){\qbezier(-15,0)(-15,-3.64)(-14.55,-3.64)
    \line(-4,-1){14.9}}
    \put(120,60){\qbezier(-15,0)(-15,-6)(-10.61,-10.61)
    \line(-1,-1){10.8}}
    \put(120,120){\qbezier(-15,0)(-15,-15)(0,-15)
    \line(0,-1){15}}
    \put(53.2,186.8){\qbezier(-6.71,-13.42)(3,-18)(10.61,-10.61)
    \line(-1,-2){6.8}}
    \put(53.2,186.8){\line(1,-1){10.8}}
    \put(40,10){\circle*{3}}
    \color{red}
    \put(53.3,186.7){\circle*{3}}
  \end{picture}
\caption{A polytope $P$; a linear form $\lambda$ defining a direction of flow; 
fixed points $v_i$ ordered according to their weight; 
points in {\color{red}$(X_P)_{v_i}^\circ$} and {\color{blue}$(X_P)^{v_i}_\circ$}
are below and above the dashed lines, respectively;
${\color{red}(X_P)_{v_i}} \cap {\color{blue}(X_P)^{v_j}} \neq \emptyset$ only if $i \leq j$.}
\label{fig:polytope}
\end{figure}
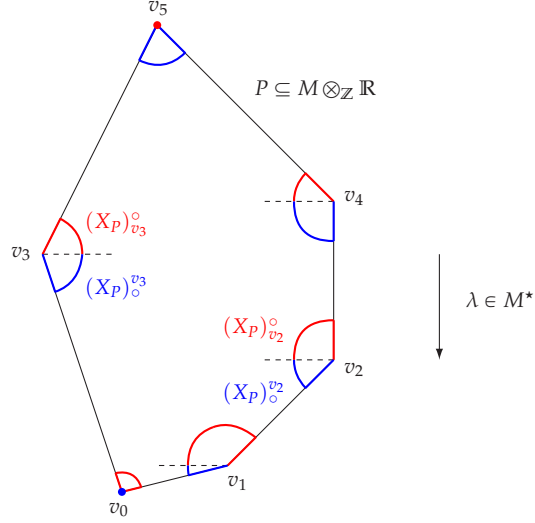
\end{center}

\begin{ex}
\label{ex:bernd}
  Let $M \cong \Z^d$ be a lattice. 
  A full dimensional smooth lattice polytope $P \subseteq M \otimes_\Z \R$, as pictured in Figure \ref{fig:polytope}, 
  defines a smooth projective variety $X_P$. 
  Topologically, $X_P$ is the closure of the image of $\TT \hookrightarrow \P^{s-1}$ given by $\tt \mapsto (\tt^\alpha)_{\alpha \in P \cap M}$. 
  In particular, each vertex $v \in P$ defines a point $v \in X_P$. 
  Schematically, $X_P$ is embedded into projective space $\P^{s-1}$ whose homogeneous coordinate ring 
  is generated by indeterminates $x_\alpha$, one for each $\alpha \in P \cap M$. 
  An open affine cover of $X_P$ is given by schemes $U_v := \Spec S_v$, one for each vertex $v \in P$, where 
  \[
    S_v := \CC\left[\frac{x_\alpha}{x_v} : \alpha \in P \cap M\right] / I_v ,
  \]
  the binomial ideal $I_v$ implementing all $\Z$-linear relations between lattice vectors $\alpha - v$, for $\alpha \in M$. 
  
  The $d$-torus $\TT = \Spec \CC[M]$ naturally acts on each coordinate ring, 
  $\TT \acts S_v : \tt.\frac{x_\alpha}{x_v} = \tt^{v-\alpha} \frac{x_\alpha}{x_v}$. 
  The induced actions on patches $U_v$ are compatible with each other, 
  hence a $\TT$-action on the entire variety $X_P$. 
  The orbit-cone correspondence \cite[Theorem 3.2.6]{coxLittleSchenck} implies that 
  vertices $v \in P$ correspond to maximal cones $\sigma_v$ in the dual fan $\Sigma_P$ of $P$, 
  which in turn correspond to $\TT$-fixed points in $X_P$.  
  We therefore denote by $v \in X_P$ the fixed point corresponding to vertex $v \in P$. 
  
  We pull back the ample line bundle $\O(1)$ on $\P^{s-1}$ to an ample line bundle $\L$ on $X_P$. 
  Let's compute, for each fixed point $v \in X_P$, the weight of the $\TT$-representation $\L|_v$. 
  We denote by $E_v$ the set of $\alpha \in P \cap M$ that are reached first when walking away from $v$ on edges of $P$. 
  Smoothness of $P$ says that for each $v$, the set $\{ \alpha - v : \alpha \in E_v \}$ is a $\Z$-basis of $M$. 
  Therefore, $U_v = \Spec S'_v \cong \A^d$, where 
  \[
    S'_v = \CC\left[\frac{x_\alpha}{x_v} : \alpha \in E_v\right] .
  \]
  Locally around $v$, the sheaf $\L$ is isomorphic to the restriction of $\O_{\P^d}(1)$ to $U_v$, 
  whose module of global sections is $\Gamma(U_v,\O(1)) = x_v \cdot S'_v$. 
  The $\TT$-action on that module is given by $\tt.(x_v \cdot\frac{x_\alpha}{x_v}) = \tt^{-\alpha} (x_v \cdot \frac{x_\alpha}{x_v})$.  
  This formula holds for all $\alpha \in E_v$ and also for $\alpha = v$. 
  In particular, the $\TT$-action on secions of $\L$ of shape ``$x_v$ times a constant'' is given by multiplication by $\tt^{-v}$.
  The stalk $\L|_v \cong \CC$ contains precisely those elements, so the $\TT$-action on $\L|_v$ is multiplication by $\tt^{-v}$. 
  In other words, $\L|_v$ has $\TT$-weight $-v$. 
  
  Consider a linear form $\lambda: M \to \Z$ separating the vertices of $P$, 
  i.e., $\langle \lambda,v \rangle \neq \langle \lambda,w \rangle$ for vertices $v \neq w$. 
  The element $\lambda \in M^\star$ corresponds to an embedding $T \hookrightarrow \TT$, 
  hence an action $T \acts X_P$. 
  The general choice of $\lambda$ implies that $T$-fixed points of $X_P$ correspond to vertices of $P$. 
  Restricting the $\TT$-action to the subtorus $T$ amounts to replacing $\tt^v$ by $t^{\langle \lambda,v \rangle}$. 
  The $T$-weight on $\L|_v$ is therefore $\Phi(v) = -\langle \lambda,v \rangle$. 
  Thus the assumptions of Proposition \ref{pro:allenBB} are satisfied. 
  
  The Bia\l ynicki-Birula cells $(X_P)_v^\circ$ and $(X_P)^v_\circ$ have explicit characterizations. 
  Both types of cells are contained in the open neighborhood $U_v$ of $v$. 
  The torus action on closed points, i.e., maximal ideals in $S'_v$, is given by 
  \[
    t.\left\langle \frac{x_\alpha}{x_v} - a_\alpha : \alpha \in E_v \right\rangle 
    = \left\langle t^{\langle \lambda, v - \alpha \rangle} \frac{x_\alpha}{x_v} - a_\alpha : \alpha \in E_v \right\rangle
    = \left\langle \frac{x_\alpha}{x_v} - t^{\langle \lambda, \alpha-v \rangle} a_\alpha : \alpha \in E_v \right\rangle .
  \]
  The limit as $t \to 0$ of this ideal exists in $S'_v$ if, and only if, $a_\alpha = 0$ 
  for all $\alpha \in E_v$ such that $\langle \lambda, \alpha-v \rangle < 0$. Thus 
  \[
    (X_P)_v^\circ = \Spec S'_v / \left\langle \frac{x_\alpha}{x_v} : \langle\lambda, \alpha-v \rangle < 0 \right\rangle ,
  \]
  and similarly for $(X_P)^v_\circ$, whose ideal is defined by the property $\langle\lambda, \alpha-v \rangle > 0$. 
  Accordingly, Figure \ref{fig:polytope} shows points in $(X_P)_v^\circ$ lying above the dashed line through $v$ 
  and points in $(X_P)^v_\circ$ below them. 
  The figure illustrates the statement of Theorem \ref{thm:intersectionOfClosures} and Corollary \ref{cor:poincare}: 
  \begin{itemize}
    \item $(X_P)_{v_i}$ only meets $(X_P)^{v_j}$ if $v_i = v_j$ or $\Phi(v_i) < \Phi(v_j)$, and 
    \item $(X_P)_{v_i}$ and $(X_P)^{v_i}$ meet transversally in $v_i$, 
    the tangent space decomposing into vectors above and below the dashed line through $v_i$. 
  \end{itemize}
\end{ex}


\section{Torus weights of triples of monomial ideals}
\label{sec:poincare}

Now we apply the findings of the previous section to the scheme $H^n(\P^2)$. 
For constructing a line bundle on that scheme, 
we embed it into a suitable projective space and pull back $\O(1)$. 
Remember that statements (ii) and (iii) from Proposition \ref{pro:BBCells} only characterized the BB sinks 
of two special types of torus fixed points, $(M_{\Delta_2},M_{\Delta_1},\emptyset)$ and $(\emptyset,M_{\Delta_1},M_{\Delta_0})$, 
respectively, in terms of lexicographic Gr\"obner basins. 
The technique for doing so was based on a study of weights $w$ and $w'$. 
We will be investigating more general fixed points $(M_{\Delta_2},M_{\Delta_1},M_{\Delta_0})$, 
using more general weights $w''$. 
We will always choose $w''$
general enough so that the only fixed points of the induced action $T \acts H^n(\P^2)$ are monomial ideals in $S$. 

\begin{lmm}
\label{lmm:closedImmersion}
  Consider the closed immersion 
  \[
    \iota: H^n(\P^2) \to G^n_{S_n} \to \P^N , 
  \]
  where 
  \begin{itemize}
    \item the first map is the closed immersion from into the Grassmannian of rank $n$ quotients of 
    the $d$-th graded piece of the polynomial ring $S$, for some $d \geq n$, and 
    \item the second map is the Pl\"ucker embedding. 
  \end{itemize}
  Let $\L$ be the pullback of the line bundle $\O(1)$ on $\P^N$ by that immersion.
  Let $w'' \in \Z^3$ be a weight such that 
  \begin{itemize}
    \item $w''_0 + w''_1 + w''_3 = 0$, 
    \item $w''_0 < w''_1 < w''_2$, and 
    \item the only fixed points of the induced action $T \acts H^n(\P^2)$ are monomial ideals in $S$, 
    identified with triples of monomial ideals in $S'$.  
  \end{itemize} 
  We denote by $\Phi_{w''}(\Delta_2,\Delta_1,\Delta_0) \in \Z$ the induced $T$-weight of $\L |_{(M_{\Delta_2},M_{\Delta_1},M_{\Delta_0})}$. 
  Then 
  \[
    \Phi_{w''}(\Delta_2,\Delta_1,\Delta_0) = - \langle {w''}, \Delta_{<d} \rangle 
    - \langle {w''}, \iota_2(\Delta_2) \rangle - \langle {w''}, \iota_1(\Delta_1) \rangle - \langle {w''}, \iota_0(\Delta_0) \rangle , 
  \]
  where $\Delta_{<d}$ is the simplex in $\N^3$ of all elements of degrees $<d$ and 
  \[
    \iota_j : \N^2 \to \N^3 : (\alpha_1,\alpha_2) \mapsto 
    \begin{cases}
      (\alpha_1,\alpha_2,d-\alpha_1-\alpha_2) & \text{if } j = 2 , \\
      (\alpha_1,d-\alpha_1-\alpha_2,\alpha_2) & \text{if } j = 1 , \\
      (d-\alpha_1-\alpha_2,\alpha_1,\alpha_2) & \text{if } j = 0 . 
    \end{cases}
  \]
\end{lmm}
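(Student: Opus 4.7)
The plan is to identify the fiber of $\L$ at each torus-fixed point with an explicit one-dimensional $T$-representation, by chasing the fixed point through the composition $H^n(\P^2) \hookrightarrow G^n_{S_d} \hookrightarrow \P^N$.

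First, I would pin down a monomial basis for $(S/I)_d$, where $I := M_{\Delta_2} \cap M_{\Delta_1} \cap M_{\Delta_0} \subseteq S$ is the homogeneous ideal corresponding to the fixed point. The three components $M_{\Delta_j}$ have pairwise disjoint set-theoretic supports inside the cells $\A^2, \A^1, \A^0$ of $\P^2$, so Hilbert functions are additive and
\[
  \dim_\CC (S/I)_d \;=\; \sum_{j=0}^2 \dim_\CC (S/M_{\Delta_j})_d \;=\; |\Delta_2|+|\Delta_1|+|\Delta_0| \;=\; n
\]
once $d$ is past the stabilization threshold; the hypothesis $d \geq n$ guarantees this for every triple $(\Delta_2, \Delta_1, \Delta_0) \in \st_{3,n}$. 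Chart by chart, the degree-$d$ standard monomials of the saturated homogenization of the chart-$j$ ideal are exactly $\{x^{\iota_j(\alpha)} : \alpha \in \Delta_j\}$, by the very definition of $\iota_j$ as the homogenization of a chart-$j$ exponent up to total degree $d$. Thus the collection
\[
  \bigl\{ x^{\iota_2(\alpha)} : \alpha \in \Delta_2 \bigr\}
  \sqcup \bigl\{ x^{\iota_1(\alpha)} : \alpha \in \Delta_1 \bigr\}
  \sqcup \bigl\{ x^{\iota_0(\alpha)} : \alpha \in \Delta_0 \bigr\}
\]
forms a $T$-eigenbasis of the quotient $Q := (S/I)_d$.

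Second, I would read off the Pl\"ucker character. The first closed immersion sends the fixed point to the quotient $S_d \twoheadrightarrow Q$, and Pl\"ucker then sends this to the one-dimensional quotient $\wedge^n S_d \twoheadrightarrow \wedge^n Q \cong \CC$. The $T$-character of this representation is the sum of characters of the basis of $Q$, namely $\sum_j \langle w'', \iota_j(\Delta_j) \rangle$. Under the sign convention for $\L$ employed in Example \ref{ex:bernd}---in which the $T$-weight of $\L\vert_v$ is the opposite of the character of the corresponding homogeneous coordinate---the weight at the fixed point equals $-\sum_j \langle w'', \iota_j(\Delta_j) \rangle$.

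Third, I would observe that the extra summand $-\langle w'', \Delta_{<d}\rangle$ appearing in the claim is identically zero, so including it is harmless. For every $k$, the set $\{\alpha \in \N^3 : |\alpha| = k\}$ is symmetric under permutations of the three coordinates, hence $\sum_{|\alpha|=k}\alpha$ is a scalar multiple of $(1,1,1)$; summing over $0 \leq k < d$, the vector $\sum_{\alpha \in \Delta_{<d}}\alpha$ is likewise a multiple of $(1,1,1)$. Pairing with $w''$ and invoking the hypothesis $w''_0+w''_1+w''_2=0$ yields $\langle w'', \Delta_{<d}\rangle = 0$, so the claimed formula matches the Pl\"ucker computation.

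The main obstacle is the standard-monomial claim of the first step. One must confirm that intersecting the three components $M_{\Delta_j}$---each obtained by saturated homogenization from an $S'$-level monomial ideal in its own chart---really produces, at degree $d$, the clean disjoint union of exponent sets $\bigsqcup_j \iota_j(\Delta_j)$. The disjoint-support additivity argument handles this, but it hinges on the stabilization bound $d \geq n$ being large enough that no transient terms from small degrees muddy the picture.
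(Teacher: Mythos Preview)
Your proposal is correct and follows essentially the same route as the paper: both identify the fiber of $\L$ at the fixed point with the Pl\"ucker line $\wedge^n(S/I)_d$, compute its $T$-character as the sum of the monomial weights $\sum_j \langle w'', \iota_j(\Delta_j)\rangle$, and then invoke the sign convention of Example~\ref{ex:bernd}. The paper carries this out via the explicit matrix representation of the quotient (tracing the $T$-action through column scalings before applying Pl\"ucker), whereas you pass directly to the top exterior power; your explicit verification that $\langle w'', \Delta_{<d}\rangle = 0$ by $S_3$-symmetry is a detail the paper leaves implicit in its closing sentence.
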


Figure \ref{lmm:closedImmersion} illustrates how immersions $\iota_j$ identify triples of monomial ideals in $S'$ with 
special types of monomial ideals in $S$. 
It also shows the range from which weights $w''$ will be taken, 
along with the particular weights $w$ and $w'$ from Proposition \ref{pro:BBCells}. 
All vectors are drawn such that their tails sit in the barycenter of the simplex $\left\{\alpha \in \N^3 : |\alpha| = d\right\}$. 

\begin{center}
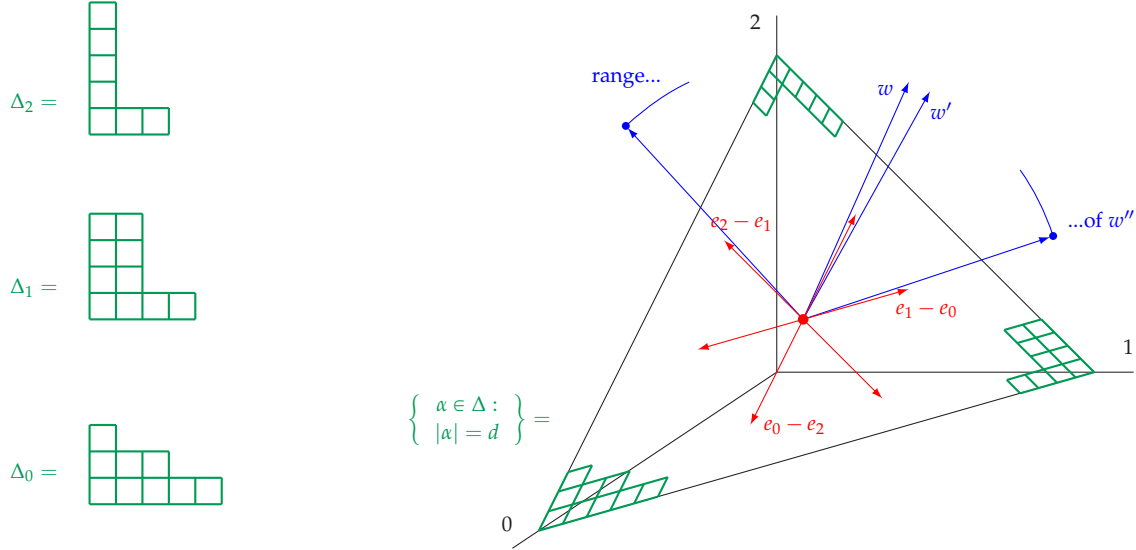
\begin{figure}[ht]
  \begin{picture}(425,195)
    \put(290,60){\line(-3,-2){100}}
    \put(290,60){\line(1,0){135}}
    \put(290,60){\line(0,1){135}}
    \put(290,180){\line(1,-1){120}}
    \put(290,180){\line(-90,-180){90}}
    \put(200,0){\line(210,60){210}}
    \put(186,0){\footnotesize $0$}
    \put(421,67){\footnotesize $1$}
    \put(280,190){\footnotesize $2$}
    \color{ForestGreen}
    \thicklines
    \put(30,150){
      \put(-30,10){\footnotesize $\Delta_2 = $}
      \multiput(0,0)(10,0){2}{\line(0,1){50}}
      \multiput(20,0)(10,0){2}{\line(0,1){10}}
      \multiput(0,0)(0,10){2}{\line(1,0){30}}
      \multiput(0,20)(0,10){4}{\line(1,0){10}}
    }
    \put(30,80){
      \put(-30,10){\footnotesize $\Delta_1 = $}
      \multiput(0,0)(10,0){3}{\line(0,1){40}}
      \multiput(0,0)(0,10){2}{\line(1,0){40}}
      \multiput(30,0)(10,0){2}{\line(0,1){10}}
      \multiput(0,20)(0,10){3}{\line(1,0){20}}
    }
    \put(30,10){
      \put(-30,10){\footnotesize $\Delta_0 = $}
      \multiput(0,0)(10,0){2}{\line(0,1){30}}
      \multiput(20,0)(10,0){2}{\line(0,1){20}}
      \multiput(40,0)(10,0){2}{\line(0,1){10}}
      \multiput(0,0)(0,10){2}{\line(1,0){50}}
      \put(0,20){\line(1,0){30}}
      \put(0,30){\line(1,0){10}}
    }
    \put(290,180){
      \multiput(0,0)(-3,-6){2}{\line(1,-1){25}}
      \multiput(-6,-12)(-3,-6){2}{\line(1,-1){5}}
      \multiput(0,0)(5,-5){2}{\line(-1,-2){9}}
      \multiput(10,-10)(5,-5){4}{\line(-1,-2){3}}
    }
    \put(410,60){
      \multiput(0,0)(-7,-2){3}{\line(-1,1){20}}
      \multiput(0,0)(-5,5){2}{\line(-21,-6){28}}
      \multiput(-21,-6)(-7,-2){2}{\line(-1,1){5}}
      \multiput(-10,10)(-5,5){3}{\line(-21,-6){14}}
    }
    \put(200,0){
      \multiput(36,10.29)(9,2.57){2}{\line(1,2){3.7}}
      \multiput(18,5.14)(9,2.57){2}{\line(1,2){7.4}}
      \multiput(0,0)(9,2.57){2}{\line(1,2){11.1}}
      \multiput(0,0)(3.7,7.4){2}{\line(7,2){45}}
      \put(7.4,14.8){\line(9,2.57){27}}
      \put(11.1,22.2){\line(9,2.57){9}}
    }
    \put(150,40){\footnotesize $\left\{\begin{array}{c} \alpha \in \Delta : \\ |\alpha| = d \end{array}\right\} = $}
    \thinlines
    \color{blue}
    \put(300,80){\vector(80,180){40}}
    \put(328,165){\footnotesize $w$}
    \put(300,80){\vector(100,180){48}}
    \put(348,156){\footnotesize $w'$}
    \put(300,80){\vector(-110,120){66.5}}
    \put(300,80){\vector(210,70){93.5}}
    \put(232.9,153.2){\circle*{3}}
    \put(232.9,153.2){\qbezier(0,0)(12,11)(24,16.6)}
    \put(394.5,111.5){\circle*{3}}
    \put(394.5,111.5){\qbezier(0,0)(-4.67,14)(-12.5,25)}
    \put(220,170){\footnotesize range...}
    \put(400,115){\footnotesize ...of $w''$}
    \color{red}
    \put(300,80){\circle*{4}}
    \put(300,80){\vector(-90,-180){20}}
    \put(300,80){\vector(90,180){20}}
    \put(300,80){\vector(-120,120){30}}
    \put(300,80){\vector(120,-120){30}}
    \put(300,80){\vector(210,60){40}}
    \put(300,80){\vector(-210,-60){40}}
    \put(285,38){\footnotesize $e_0 - e_2$}
    \put(335,82){\footnotesize $e_1 - e_0$}
    \put(265,115){\footnotesize $e_2 - e_1$}
  \end{picture}
\caption{Identification of triples of monomial ideals in $S'$ and monomial ideals in $S$, 
and the weights $w$ and $w'$}
\label{fig:triplesStandardSets}
\end{figure}
\end{center}

\begin{proof}[Proof of Lemma \ref{lmm:closedImmersion}]
  The first of the two immersions is described in 
  \cite[\S VI.1]{bayerThesis}, \cite[Proposition C.30]{IarrobinoKanev} and \cite[Theorem 4.4]{multigradedHilbertSchemes}
  for arbitrary Grothendieck Hilbert schemes. 
  The scheme $H^n(\P^2)$ is a special case in which the immersion is described thusly: 
  Take a point $a$ in $H^n(\P^2)$, say, an $A$-valued one, where $A$ is any $\CC$-algebra. 
  This point is is a homogeneous ideal $I \subseteq S \otimes_\CC A$ such that for large $d$, 
  the quotient $(S \otimes_\CC A)_d / I_d$ is a locally free $A$-module of rank $n$. 
  After replacing $I$ by its saturation, we may assume that the quotient is locally free of rank $n$ for all $d \geq n$. 
  The last inequality is a consequence of the Gotzmann number of the constant Hilbert polynomial $n$ being equal to $n$. 
  After passing to a Zariski-open subset of $\Spec A$, 
  we may assume that $(S \otimes_\CC A)_d / I_d$ is free of rank $n$. 
  The epimorphism of $A$-modules $(S \otimes_\CC A)_d \to (S \otimes_\CC A)_d / I_d$ is therefore represented by a matrix
  $C$ with entries in $A$ with $n$ rows and ${n + 2 \choose 2}$ columns. 
  The columns represent the images of the monomials $y^\alpha$ in $(S \otimes_\CC A)_d / I_d$, 
  written in terms of a basis of that quotient. 
  
  After passing to a smaller Zariski-open subset of $\Spec A$, 
  we may assume a basis of $(S \otimes_\CC A)_d / I_d$ to be given by all $y^\beta$ of total degree $d$ lying in the standard set $\Delta$ 
  of a monomial ideal $M_\Delta \subseteq S$. 
  The matrix $C$ therefore contains an identity block indexed by columns $y^\beta$, for all $\beta \in \Delta$ of total degree $d$. 
  For each $y^\alpha \in M$ of total degree $d_0$, 
  there exist unique $c^\alpha_\beta \in A$ such that 
  \[
    y^\alpha = \sum_{\beta \in \Delta, |\beta| = d_0} a^\alpha_\beta y^\beta \in (S \otimes_\CC A)_d / I_d , 
  \]
  or equivalently, 
  \[
    f_\alpha := y^\alpha - \sum_{\beta \in \Delta, |\beta| = d_0} a^\alpha_\beta y^\beta \in I_{d_0} . 
  \]
  Column $y^\alpha$ of matrix $C$ therefore has entries $a^\alpha_\beta \in A$ if $\alpha \in M_\Delta$, 
  and $\delta^\alpha_\beta$ otherwise. 
  Upon reordering the columns of $C$, we obtain $C = \left( \begin{array}{cc} E & A \end{array} \right)$, 
  where $E$ is the identity matrix and $A$ is the matrix of coefficients $a^\alpha_\beta$ of polynomials $f_\alpha$. 
  The matrix $C$ defines a point in the Grassmannian $G^n_{S_n}$. 
  The cited results from \cite{bayerThesis, IarrobinoKanev, multigradedHilbertSchemes} 
  say that assigning to point $a \in H^n(\P^2)$ that point in the Grassmannian defines a closed immersion  
  \[
    H^n(\P^2) \to G^n_{S_n} ,
  \]
  and that we may play the same game in any degree $d \geq n$: 
  For each $y^\alpha \in M$ of total degree $d$, 
  find unique $b^\alpha_\beta \in A$ such that 
  \[
    f_\alpha := y^\alpha - \sum_{\beta \in \Delta, |\beta| = d} b^\alpha_\beta y^\beta \in I_d ; 
  \]
  write their coefficients into a matrix $D = \left( \begin{array}{cc} E & B \end{array} \right)$; 
  get a map 
  \[
    H^n(\P^2) \to G^n_{S_d} .
  \]
  This map is a closed immersion factoring through the first closed immersion. 

  The Pl\"ucker embedding then sends the point represented by matrix $D$ to the point 
  represented by a one-row matrix whose entries are the maximal minors of the matrix $D$. 
  For tracing the effect of the action $T \acts H^n(\P^2)$ on that image point, 
  we first observe that 
  \[
    \frac{t.f_\alpha}{t^{\langle w, \alpha \rangle}} 
    = y^\alpha - \sum_{\beta \in \Delta, |\beta| = d} t^{\langle w, \beta-\alpha \rangle} b^\alpha_\beta y^\beta \in I_d , 
  \]
  which has the entry $b^\alpha_\beta$ of matrix $D$ replaced by $t^{\langle w, \beta-\alpha \rangle} b^\alpha_\beta$. 
  We may multiply the entire $\alpha$th row of the rescaled matrix with $t^{\langle w, \alpha \rangle}$ without changing 
  its point in the Grassmannian. 
  The $T$-action may therefore be understood to just scale column $y^\beta$ of $D$ by the factor $t^{\langle w, \beta \rangle}$. 
  After applying the Pl\"ucker embedding, the action therefore sends a point with homogeneous coordinates $(\det(C'_D))_D$ 
  to the point with homogeneous coordinates $(t^{\langle w, D \rangle}\det(C'_D))_D$, 
  where $\langle w, D \rangle := \sum_{\beta \in D}\langle w, \beta \rangle$. 
  
  Locally around the $T$-fixed point $M_\Delta \subseteq S$, projective space $\P^N$ is an affine space 
  with coordinate ring 
  \[
    S_\Delta := \CC\left[\frac{x_E}{x_\Delta} : E \subseteq \{\alpha \in \N^3 : |\alpha| = d\}, |E| = n, E \neq \Delta \right] 
  \]
  We have just shown that the action 
  \[
    T \acts S_\Delta : t.\frac{x_E}{x_\Delta} := t^{-\langle w, E \rangle + \langle w, \Delta \rangle}\frac{x_E}{x_\Delta}
  \]
  induces an action on projective space 
  equivariant with respect to the immersion $H^n(\P^2) \hookrightarrow \P^N$. 
  The same arguments as in Example \ref{ex:bernd} show that the stalk $\O(1)|_{M_\Delta}$ 
  of line bundle $\O(1)$ on $\P^N$ has $T$-weight $-\langle w, \Delta \rangle$. 
  Therefore, so does the stalk $\L|_{M_\Delta}$ of line bundle $\L := \iota^\star \O(1)$ on $H^n(\P^2)$. 
  The statement of the lemma is now just a reformulation of this formula in terms of triples of monomial ideals in $S'$. 
\end{proof}

In the proof of Theorem \ref{thm:upperTriangularity}, 
we will be using torus actions $T \acts S'$ with general weights $u,v \in \Z^2$ such that $u_1,u_2 < 0$ and $v_1,v_2 > 0$, respectively, 
and the induced BB decompositions 
\begin{equation*}
  \begin{split}
    H^n(\A^2) & = \coprod_{|\Delta| = n} H^{\Delta}_u(\A^2) , \\
    H^{n,\punc}(\A^2) & = \coprod_{|\Delta| = n} H^{\Delta,\punc}_v(\A^2) . 
  \end{split}
\end{equation*}
The respective cofactors parametrize ideals $I \subseteq S'$
floating into $M_\Delta$ from above under the action induced by $T \acts S'$ with weight vectors $u$ and $v$, respectively. 
When using weights different from the ones considered by Ellingsrud and Str\o mme, 
the structure of the BB sinks is in general hard to determine \cite{constantinescu}. 
However, it turns out that we only have to work with very specific types of monomial ideals. 

\begin{dfn}
\label{dfn:genericMonomialIdeals}
  Schemes $H^n(\A^2)$ and $H^{n,\punc}(\A^2)$ are smooth and irreducible of dimensions $2n$ and $n-1$, 
  respectively \cite{Fogarty_smoothness,briancon}. 
  The above-displayed BB decompositions therefore 
  contain unique members of maximal dimensions $2n$ and $n-1$, respectively. 
  We call the corresponding torus fixed point the \defn{generic monomial ideals} 
  in $H^n(\A^2)$ and $H^{n,\punc}(\A^2)$
  with respect to the weights $u$ and $v$, respectively.   
\end{dfn}

The shape of generic standard sets for a given weight shall be specified in the Appendix.

\begin{proof}[Proof of Theorem \ref{thm:upperTriangularity}]
  
  Under our identification of ideals $I = I_2 \cap I_1 \cap I_0$ in $S$ 
  with triples $(I_2,I_1,I_0)$ of ideals in $S'$, the BB sinks of an action $T \acts H^n(\P^2)$ 
  inherited from $T \acts S$ with a general weight $w''$ take the shape 
  \[
  \begin{array}{ccl}
    \BB(w'')_{(\Delta_2, \Delta_1, \Delta_0)}^\circ & := & 
    \left\{ \begin{array}{c} (I_2,I_1,I_0) \in H^{n_2}(\A^2) \times H^{n_1,\lin}(\A^2) \times H^{n_0,\punc}(\A^2) : \\
    \lim_{t \to 0} t._{w''} (I_2,I_1,I_0) = (M_{\Delta_2},M_{\Delta_1},M_{\Delta_0}) \end{array} \right\} \\
    & = & H^{\Delta_2}_{w''(2)}(\A^2) \times H^{\Delta_1,\lin}_\lex(\A^2) \times H^{\Delta_0,\punc}_{w''(0)}(\A^2) , \\
    \BB(w'')^{(\Delta_2, \Delta_1, \Delta_0)}_\circ & := & 
    \left\{ \begin{array}{c}  (I_2,I_1,I_0) \in H^{n_2,\punc}(\A^2) \times H^{n_1,\lin}(\A^2) \times H^{n_0}(\A^2) : \\ 
    \lim_{t \to \infty} t._{w''} (I_2,I_1,I_0) = (M_{\Delta_2},M_{\Delta_1},M_{\Delta_0}) \end{array} \right\} \\
    & \simeq & H^{\Delta_0}_{-w''(0)}(\A^2) \times H^{(\Delta'_1)^t,\lin}_\lex(\A^2) \times H^{\Delta_2,\punc}_{-w''(2)}(\A^2) . 
  \end{array}
  \]
  Here and in what follows we use the notation $w''(2) := (w''_0 - w''_2, w''_1 - w''_2)$ and analogues for indices $1$ and $0$. 
  As for the middle factor in the first formula, we might have used the weight $w''(1)$ as a subscript; 
  however, as was shown in the proof of Proposition \ref{pro:BBCells}, 
  the weight $w''(1)$ plus the fact that we consider ideals supported in $\V(y_2)$ implies that the BB sink 
  is the same thing as the lexicographic Gr\"obner basin. 
  As for the the middle factor in the second formula, we used the identity 
  \[
    H^{\Delta'_1,\lin}_{-w''(1)}(\A^2) = H^{(\Delta'_1)^t,\lin}_\lex(\A^2) 
  \]
  coming from the fact that the weight $-w''(1)$ is positive in the first component and negative in the second component. 
  Moreover, in the second of the above-displayed formul\ae\ we reversed the order of the three factors, 
  so as to get them into our usual ``plane--line--point'' shape. 
    
  Take two homology classes such that $[\Delta_2, \Delta_1, \Delta_0] \cdot [\Delta'_2, \Delta'_1, \Delta'_0] \neq 0$, 
  then $(\Delta_2, \Delta_1, \Delta_0) \cap (\Delta'_2, \Delta'_1, \Delta'_0) \neq \emptyset$. 
  Remember that the last two varieties are not closures of BB cells. 
  The idea of our proof is to embed them into closures of BB cells 
  and use those ambient BB cells as approximations of the varieties we wish to study. 
  We use a number of weights $w''$ for deriving inequalities on the standard sets $\Delta_i,\Delta'_j$. 
  Independently of the weight chosen, inclusions
  \[
    \begin{array}{cccccc}
      H^{\Delta_2}_\lex(\A^2) & \subseteq & H^{\Gamma_2}_{w''(2)}(\A^2) , & 
      H^{\Delta_0,\punc}_\lex(\A^2) & \subseteq & H^{\Gamma_0}_{w''(0)}(\A^2) , \\
      H^{\Delta'_2}_\lex(\A^2) & \subseteq & H^{\Gamma'_2}_{-w''(0)}(\A^2) , & 
      H^{\Delta'_0,\punc}_\lex(\A^2) & \subseteq & H^{\Gamma'_0}_{-w''(2)}(\A^2) , \\
    \end{array}
  \]
  hold true, where $M_{\Gamma_i}$ denotes the generic monomial ideals for the respective weights. 
  Therefore, 
  \[
    \begin{array}{ccccc}
      (\Delta_2, \Delta_1, \Delta_0) & \subseteq & 
      \overline{ H^{\Gamma_2}_{w''(2)}(\A^2) \times H^{\Delta_1,\lin}_{w''(1)}(\A^2) \times H^{\Gamma_0,\punc}_{w''(0)}(\A^2) } & = & 
      \BB(w'')_{(\Gamma_2, \Delta_1, \Gamma_0)} , \\
      (\Delta'_2, \Delta'_1, \Delta'_0) & \subseteq & 
      \overline{ H^{\Gamma'_2}_{-w''(0)}(\A^2) \times H^{\Delta'_1,\lin}_{w''(1)}(\A^2) \times H^{\Gamma'_0,\punc}_{-w''(2)}(\A^2) } & \simeq & 
      \BB(w'')^{(\Gamma'_0, (\Delta'_1)^t, \Gamma'_2)} .
    \end{array}
  \]
  Note that the second formula doesn't state equality but rather just isomorphism, 
  given by reversing the roles of variables $x_0,x_1,x_2$. 
  Since the spaces on the left-hand side intersect nontrivially, so do the spaces on the right-hand side. 
  Theorem \ref{thm:intersectionOfClosures} therefore says that 
  \[
    \Phi_{w''}(\Gamma_2,\Delta_1,\Gamma_0) \leq \Phi_{w''}(\Gamma'_0, (\Delta'_1)^t, \Gamma'_2) .
  \]
  Using the explicit formula for weights from Lemma \ref{lmm:closedImmersion}, this inequality reads 
  \begin{equation*}
    \begin{split}
      & 
      - \langle w''(2), \Gamma_2 \rangle  
      - \langle w''(1), \Delta_1 \rangle  
      - \langle w''(0), \Gamma_0 \rangle 
      - w''_2|\Gamma_2|d - w''_1|\Delta_1|d - w''_0|\Gamma_0|d \\    
      \leq & 
      - \langle w''(2), \Gamma'_0 \rangle  
      - \langle w''(1), (\Delta'_1)^t \rangle  
      - \langle w''(0), \Gamma'_2 \rangle 
      - w''_2|\Gamma'_0|d - w''_1|(\Delta'_1)^t|d - w''_0|\Gamma'_2|d .
    \end{split}
  \end{equation*}
  
  If we choose $d \gg n$, then only the last three summands on either side 
  contribute to the inequality. We are then left with inequality
  \[
    \langle w'', (|\Gamma_0| - |\Gamma'_2|, |\Delta_1| - |\Delta'_1|, |\Gamma_2| - |\Gamma'_0|) \rangle 
    \geq 0 ,
  \]
  which says that the angle between the two vectors in the plane is at most $\pi/2$. 
  At this point we use two specific weights $w''$. 
  The first weight of choice is such that $w''_2-w''_1 \gg w''_1-w''_0$. 
  Visually this is the blue vector pointing to the far left in Figure \ref{fig:triplesStandardSets}. 
  Then the previous inequality amounts to  
  \[
    |\Gamma_2| - |\Gamma'_0| = |\Delta_2| - |\Delta'_0| \geq 0 .
  \]
  The second of choice is such that $w''_1-w''_0 \gg w''_2-w''_1$. 
  Visually this is the blue vector pointing to the far right in Figure \ref{fig:triplesStandardSets}. 
  Then the above inequality amounts to  
  \[
    |\Gamma_0| - |\Gamma'_2| = |\Delta_0| - |\Delta'_2| \leq 0 .
  \]
  We thus obtain 
  \begin{itemize}
    \item $|\Delta_2| \geq |\Delta'_0|$ and $|\Delta_0| \leq |\Delta'_2|$,
  \end{itemize}
  i.e., the first of the two bulleted conditions from Definition \ref{dfn:orderings}. 
  
  Say equality holds here. Then also $|\Delta_1| = |\Delta'_1|$. 
  At this point we make explicit use of affine cells 
  $(\Delta_2, \Delta_1, \Delta_0)^\circ$ and $(\Delta'_2, \Delta'_1, \Delta'_0)^\circ$ in $H^n(\P^2)$ 
  as visualized in Figure \ref{fig:triples}. 
  As for the first cell, we use it literally as defined in the Introduction, 
  so $(\Delta_2, \Delta_1, \Delta_0)^\circ$ parametrizes triples $(I_2,I_1,I_0)$ of ideals
  \begin{itemize}
    \item[$\circ$] $I_2 \subseteq \CC[\frac{x_0}{x_2},\frac{x_1}{x_2}]$ such that $\IN_\lex(I_2) = M_{\Delta_2}$, 
    where $\frac{x_0}{x_2} > \frac{x_1}{x_2}$; 
    \item[$\circ$] $I_1 \subseteq \CC[\frac{x_0}{x_1},\frac{x_2}{x_1}]$ supported in $\V(\frac{x_2}{x_1})$ such that $\IN_\lex(I_1) = M_{\Delta_1}$, 
    where $\frac{x_0}{x_1} > \frac{x_2}{x_1}$; and 
    \item[$\circ$] $I_0 \subseteq \CC[\frac{x_1}{x_0},\frac{x_2}{x_0}]$ supported at the origin such that $\IN_\lex(I_0) = M_{\Delta_0}$, 
    where $\frac{x_1}{x_0} > \frac{x_2}{x_0}$. 
  \end{itemize}
  Hence the picture on the left-hand side of Figure \ref{fig:triples}. 
  For indicating the ``direction of flow'', the generic monomial ideals in $\CC[\frac{x_0}{x_2},\frac{x_1}{x_2}]$ and 
  $\CC[\frac{x_1}{x_0},\frac{x_2}{x_0}]$, respectively, are drawn in blue, 
  and the least generic ones (whose cells are in fact points) are drawn in blue. 
  This is in analogy to the use of red and blue in Figure \ref{fig:polytope}. 
  As for the second cell, defined by reversing the order of the variables. 
  Read in this way, $(\Delta'_2, \Delta'_1, \Delta'_0)^\circ$ parametrizes triples of ideals
  \begin{itemize}
    \item[$\circ$] $I_2 \subseteq \CC[\frac{x_0}{x_2},\frac{x_1}{x_2}]$ supported at the origin such that $\IN_\lex(I_2) = M_{\Delta_2}$, 
    where $\frac{x_1}{x_2} > \frac{x_0}{x_2}$; 
    \item[$\circ$] $I_1 \subseteq \CC[\frac{x_0}{x_1},\frac{x_2}{x_1}]$ supported in $\V(\frac{x_0}{x_1})$ such that $\IN_\lex(I_1) = M_{\Delta_1}$, 
    where $\frac{x_2}{x_1} > \frac{x_0}{x_1}$; and 
    \item[$\circ$] $I_0 \subseteq \CC[\frac{x_1}{x_0},\frac{x_2}{x_0}]$ such that $\IN_\lex(I_0) = M_{\Delta_0}$, 
    where $\frac{x_2}{x_0} > \frac{x_1}{x_0}$. 
  \end{itemize}
  Hence the picture on the left-hand side of Figure \ref{fig:triples}. 
  Note the new ``direction of flow'', which got reversed when reversing the order of the variables. 
  \begin{center}
  \begin{figure}[ht]
    \begin{picture}(450,180)
      \multiput(0,0)(250,0){2}{
        \put(80,60){\line(-3,-2){80}}
        \put(80,60){\line(0,1){120}}
        \put(80,60){\line(1,0){120}}
        \multiput(80,160)(60,0){2}{\line(-3,-2){40}}
        \multiput(80,160)(-40,-26.67){2}{\line(1,0){60}}
        \multiput(180,60)(0,60){2}{\line(-3,-2){40}}
        \multiput(180,60)(-40,-26.67){2}{\line(0,1){60}}
        \multiput(13.34,15.56)(60,0){2}{\line(0,1){60}}
        \multiput(13.34,15.56)(0,60){2}{\line(1,0){60}}
        \put(2,0){\footnotesize $0$}
        \put(195,64){\footnotesize $1$}
        \put(73,175){\footnotesize $2$}
      }
      \put(32,20){\footnotesize $\V(I_0)$}
      \put(160,35){\footnotesize $\V(I_1)$}
      \put(120,135){\footnotesize $\V(I_2)$}
      \put(300,30){\footnotesize $\V(I'_2)$}
      \put(436,85){\footnotesize $\V(I'_1)$}
      \put(336,148){\footnotesize $\V(I'_0)$}
      \put(13.34,15.56){\circle*{8}}
      \multiput(174,56)(-12,-8){2}{\circle*{4}}
      \multiput(168,52)(-15,-10){2}{\circle*{3}}
      \put(147,38){\circle*{3}}
      \put(330,160){\circle*{8}}
      \multiput(430,63)(0,20){2}{\circle*{3}}
      \multiput(430,70)(0,35){2}{\circle*{4}}
      \put(430,98){\circle*{3}}
      \multiput(174,56)(-12,-8){2}{\circle*{4}}
      \multiput(100,150)(-15,-10){2}{\circle*{2}}
      \multiput(89,155)(-12,-8){2}{\circle*{3}}
      \multiput(83,151)(-18,-12){2}{\circle*{2}}
      \multiput(110,152)(-18,-12){2}{\circle*{2}}
      \multiput(273,30)(0,25){2}{\circle*{2}}
      \multiput(280,50)(0,18){2}{\circle*{2}}
      \multiput(290,25)(0,35){2}{\circle*{3}}
      \multiput(290,35)(0,18){2}{\circle*{2}}
      \color{blue}
        \multiput(67,160)(5,0){2}{\line(-3,-2){20}}
        \multiput(67,160)(-3.33,-2.22){7}{\line(1,0){5}}
        \multiput(13.34,2.56)(0,5){2}{\line(1,0){30}}
        \multiput(13.34,2.56)(5,0){7}{\line(0,1){5}}
        \multiput(338.67,165.78)(-3.33,-2.22){2}{\line(1,0){30}}
        \multiput(338.67,165.78)(5,0){7}{\line(-3,-2){3.33}}
        \multiput(250.34,15.56)(5,0){2}{\line(0,1){30}}
        \multiput(250.34,15.56)(0,5){7}{\line(1,0){5}}
      \color{red}
        \multiput(88.67,165.78)(-3.33,-2.22){2}{\line(1,0){30}}
        \multiput(88.67,165.78)(5,0){7}{\line(-3,-2){3.33}}
        \multiput(.34,15.56)(5,0){2}{\line(0,1){30}}
        \multiput(.34,15.56)(0,5){7}{\line(1,0){5}}
        \multiput(317,160)(5,0){2}{\line(-3,-2){20}}
        \multiput(317,160)(-3.33,-2.22){7}{\line(1,0){5}}
        \multiput(263.34,2.56)(0,5){2}{\line(1,0){30}}
        \multiput(263.34,2.56)(5,0){7}{\line(0,1){5}}
    \end{picture}
  \caption{Points in $(\Delta_2, \Delta_1, \Delta_0)^\circ$ and the coordinate-reversed version of $(\Delta'_2, \Delta'_1, \Delta'_0)^\circ$, 
  and the {\color{red}most generic} and {\color{blue}least generic} monomial ideals on four factors}
  \label{fig:triples}
  \end{figure}
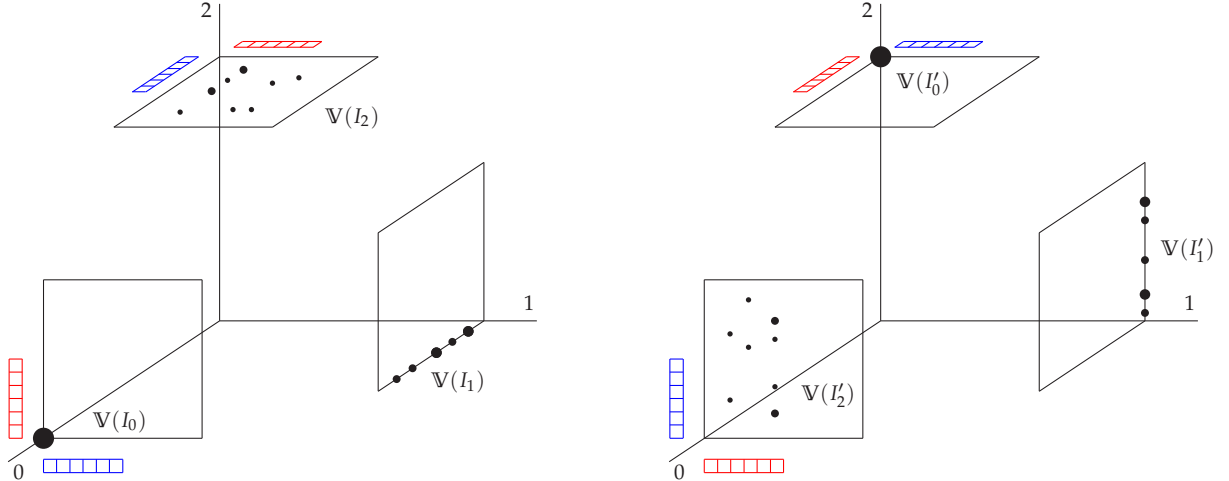
  \end{center}
  
  Passing to the closures of the two affine cells, 
  it's fairly easy to see that the assumption that $n_j := |\Delta_j| = |\Delta'_{2-j}|$ for $j = 0,1,2$, plus the fact that 
  \begin{itemize}
    \item[$\circ$] the support of $I_1$ can't leave the line $\V(x_2) \subseteq \P^2$,
    \item[$\circ$] the support of $I_0$ can't leave the point $\V(x_1,x_2) \subseteq \P^2$,
    \item[$\circ$] the support of $I'_1$ can't leave the line $\V(x_0) \subseteq \P^2$,
    \item[$\circ$] the support of $I'_2$ can't leave the point $\V(x_0,x_1) \subseteq \P^2$
  \end{itemize}
  implies that the locus where $(\Delta_2, \Delta_1, \Delta_0)$ and $(\Delta'_2, \Delta'_1, \Delta'_0)$ 
  intersect is contained in the product of schemes $H^{n_2,\punc}(\A^2)$ parametrizing ideals supported in $(1:0:0)$, 
  $H^{n_1,\punc}(\A^2)$ parametrizing ideals supported in $(0:1:0)$, 
  and $H^{n_0,\punc}(\A^2)$ parametrizing ideals supported in $(0:0:1)$. 
  In particular, if $(I''_2,I''_1,I''_0)$ is a point from the intersection, 
  then 
  \[
    I''_j \in \overline{H^{\Delta_j}_\lex(\A^2)} \cap \overline{H^{(\Delta'_{2-j})^t}_\lex(\A^2)} , 
  \]
  where the closure is taken within $H^{n_2}_\lex(\A^2)$. 

  For $j = 2$, this inclusion says that the component $I''_2$ 
  of any element of $(\Delta_2, \Delta_1, \Delta_0) \cap (\Delta'_2, \Delta'_1, \Delta'_0)$ 
  lies in $(\Delta_2, \emptyset, \emptyset) \cap (\emptyset, \emptyset, \Delta'_0)$. 
  Therefore, $(\Delta_2, \Delta_1, \Delta_0) \cap (\Delta'_2, \Delta'_1, \Delta'_0) \neq \emptyset$ if, and only if, 
  $(\Delta_2, \emptyset, \emptyset) \cap (\emptyset, \emptyset, \Delta'_0) \neq \emptyset$, 
  in which case we write the last two two schemes as closures of BB cells, 
  \[
    \begin{array}{ccccc}
      (\Delta_2, \emptyset, \emptyset) & = & 
      \overline{ H^{\Delta_2}_{w''(2)}(\A^2) \times H^{\emptyset,\lin}_{w''(1)}(\A^2) \times H^{\emptyset,\punc}_{w''(0)}(\A^2) } & = & 
      \BB(w'')_{(\Delta_2,\emptyset, \emptyset)} , \\
      (\emptyset, \emptyset, \Delta'_0) & = & 
      \overline{ H^{(\Delta'_0)^t}_{-w''(0)}(\A^2) \times H^{\emptyset,\lin}_{w''(1)}(\A^2) \times H^{\emptyset,\punc}_{-w''(2)}(\A^2) } & \simeq & 
      \BB(w'')^{((\Delta'_0)^t, \emptyset, \emptyset)} .
    \end{array}
  \]
  Once more Theorem \ref{thm:intersectionOfClosures} allows to derive an inequality on $w''$-weights which, 
  by Lemma \ref{lmm:closedImmersion}, reduces to 
  \[
    - \langle w''(2), \Delta_2 \rangle
    \leq
    - \langle w''(2), (\Delta'_0)^t \rangle ,
  \]
  i.e., the inequality $\langle \mu, \Delta_2 \rangle \geq \langle \mu, (\Delta'_0)^t \rangle$ as claimed in Definition \ref{dfn:orderings}. 
  The claim for $j = 0$ follows by symmetry. 
  For $j = 1$, we analogously see that 
  $(\Delta_2, \Delta_1, \Delta_0) \cap (\Delta'_2, \Delta'_1, \Delta'_0) \neq \emptyset$ if, and only if, 
  $(\emptyset, \Delta_1, \emptyset) \cap (\emptyset, \Delta'_1, \emptyset) \neq \emptyset$, 
  in which case the inequality from Theorem \ref{thm:intersectionOfClosures} reducing to 
  $\langle \lambda, \Delta_1 \rangle \geq \langle \lambda, (\Delta'_1)^t \rangle$. 
\end{proof}


\section*{Appendix}

We first collect a few elementary facts about the natural partial ordering $\leq$ and the partial orderings 
$\leq_\xi$ on $\st_m$ induced by weights $\xi = \mu,\lambda,\nu$ as in Definition \ref{dfn:orderings}, 
only proving the least obvious statement. 

\begin{lmm}
\label{lmm:dualityOfOrderings}
  The two partial orderings $\leq_\mu$ and $\leq_\nu$ on $\st_m$ are dual to each other in the sense that 
  $\Delta \leq_\mu \Delta'$ if, and only if, $(\Delta')^t \leq_\nu \Delta^t$. 
\end{lmm}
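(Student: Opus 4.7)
The plan is to reduce the duality to a one-line algebraic identity coming from the definition of transposition. For any $\alpha = (\alpha_1,\alpha_2) \in \N^2$ and any $\mu = (\mu_1,\mu_2) \in \Z^2$, transposition swaps the coordinates, so
\[
  \langle \mu, \alpha^t \rangle = \mu_1 \alpha_2 + \mu_2 \alpha_1 = \langle (\mu_2,\mu_1), \alpha \rangle .
\]
Summing over $\alpha \in \Delta$ yields the key identity
\[
  \langle \mu, \Delta^t \rangle \;=\; \langle (\mu_2,\mu_1), \Delta \rangle .
\]

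First I would check that the swap $\mu \mapsto (\mu_2,\mu_1)$ sends a generic $\mu$-weight to a generic $\nu$-weight. A $\mu$-weight satisfies $\mu_1 \ll \mu_2 < 0$, and (correcting the evident typo in Definition \ref{dfn:orderings}(b), where the condition on $\nu$ should read $\nu_2 \ll \nu_1 < 0$) the swapped vector $\tilde\nu := (\mu_2, \mu_1)$ then satisfies $\tilde\nu_2 = \mu_1 \ll \mu_2 = \tilde\nu_1 < 0$, so $\tilde\nu$ is a valid $\nu$-weight. Conversely, every $\nu$-weight is of this form for some $\mu$.

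Next I would simply substitute into the definitions. By definition, $\Delta \leq_\mu \Delta'$ means $\langle \mu, \Delta \rangle \geq \langle \mu, \Delta' \rangle$. Applying the identity with the roles of $\Delta$ and $\Delta^t$ swapped (i.e.\ using $\langle \mu, \Delta \rangle = \langle \tilde\nu, \Delta^t \rangle$ and likewise for $\Delta'$) turns this into $\langle \tilde\nu, \Delta^t \rangle \geq \langle \tilde\nu, (\Delta')^t \rangle$, which is precisely the defining condition for $(\Delta')^t \leq_{\tilde\nu} \Delta^t$.

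I do not expect a genuine obstacle here; the only subtlety is the bookkeeping needed to verify that the ``generic enough'' conditions on $\mu$ and $\nu$ correspond under the swap, so that one is not silently restricting to a subclass of weights. Since the partial orderings $\leq_\mu$ and $\leq_\nu$ are defined on $\st_m$ only in terms of the linear functionals $\langle \mu, -\rangle$ and $\langle \nu, -\rangle$ evaluated on finitely many standard sets, and since the swap $\mu \leftrightarrow (\mu_2,\mu_1)$ is a bijection between the two generic ranges, the identification is clean and the lemma follows.
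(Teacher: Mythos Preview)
Your argument is correct and is exactly the natural elementary verification; the paper itself omits the proof entirely, listing the lemma among ``a few elementary facts\ldots only proving the least obvious statement'' and then passing on to the Proposition. Your identification of the typo in Definition~\ref{dfn:orderings}(b) (the condition on $\nu$ should read $\nu_2 \ll \nu_1 < 0$, the coordinate-swap of the condition on $\mu$) is also the right reading, consistent with the involution in part~(c) and with the symmetry invoked in the proof of Theorem~\ref{thm:upperTriangularity}.
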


\begin{lmm}
\label{lmm:productOrdering}
  The partial ordering $\leq_\mu$ on $\st_m$ is the lexicographic refinement of $\leq_{(-1,0)}$ by $\leq_{(0,-1)}$
  in the sense that $\Delta \leq_\mu \Delta'$ if, and only if, 
  \begin{itemize}
    \item $\Delta \leq_{(-1,0)} \Delta'$, or 
    \item $\Delta =_{(-1,0)} \Delta'$ and $\Delta \leq_{(0,-1)} \Delta'$. 
  \end{itemize}
\end{lmm}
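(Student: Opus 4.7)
The plan is to translate the three partial orderings into numerical statements about the coordinate-wise sums $s_i(\Delta) := \sum_{\alpha \in \Delta} \alpha_i$ for $i = 1,2$, and then to exploit the hypothesis $\mu_1 \ll \mu_2 < 0$ to show that $s_1$ dictates the comparison in $\leq_\mu$, with $s_2$ appearing only as a tie-breaker. Since $\langle \mu, \Delta \rangle = \mu_1 s_1(\Delta) + \mu_2 s_2(\Delta)$, the relation $\Delta \leq_{(-1,0)} \Delta'$ unpacks to $s_1(\Delta) \leq s_1(\Delta')$, the relation $\Delta \leq_{(0,-1)} \Delta'$ unpacks to $s_2(\Delta) \leq s_2(\Delta')$, and $\Delta \leq_\mu \Delta'$ becomes $|\mu_1|(s_1(\Delta') - s_1(\Delta)) \geq |\mu_2|(s_2(\Delta') - s_2(\Delta))$. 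Because $\st_m$ is finite, the quantity $M := \max\{|s_2(\Delta) - s_2(\Delta')| : \Delta, \Delta' \in \st_m\}$ is a finite integer depending only on $m$ (bounded, in fact, by $\binom{m+1}{2}$), and the informal hypothesis $\mu_1 \ll \mu_2$ will be made precise as $|\mu_1| > |\mu_2| \cdot M$.

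For the ``$\Leftarrow$'' direction I would split into two cases. If $s_1(\Delta) = s_1(\Delta')$ and $s_2(\Delta) \leq s_2(\Delta')$, then $\langle \mu, \Delta \rangle - \langle \mu, \Delta' \rangle = \mu_2(s_2(\Delta) - s_2(\Delta')) \geq 0$ because $\mu_2 < 0$, so $\Delta \leq_\mu \Delta'$. If instead $s_1(\Delta) < s_1(\Delta')$, then $s_1(\Delta') - s_1(\Delta) \geq 1$, so the left-hand side of the reformulated inequality above is at least $|\mu_1|$, which by the genericity hypothesis strictly exceeds $|\mu_2| \cdot M$ and hence also its right-hand side. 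For the ``$\Rightarrow$'' direction I would argue contrapositively. If $s_1(\Delta) > s_1(\Delta')$, then the same domination argument with signs reversed gives $\langle \mu, \Delta \rangle < \langle \mu, \Delta' \rangle$; and if $s_1(\Delta) = s_1(\Delta')$ but $s_2(\Delta) > s_2(\Delta')$, then $\mu_2 < 0$ yields $\langle \mu, \Delta \rangle < \langle \mu, \Delta' \rangle$ immediately.

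The only subtlety, and the closest thing here to a genuine obstacle, is pinning down what ``sufficiently generic'' means for $\mu$: one needs $|\mu_1|/|\mu_2|$ to exceed the maximal possible difference of second-coordinate sums over $\st_m$, which is a uniform bound in $m$ rather than a condition growing with $n$. This matches the flavour of genericity already invoked in the footnote to Definition \ref{dfn:orderings}, so no new machinery is required, and the remainder of the argument is elementary arithmetic in a bounded, finite setting.
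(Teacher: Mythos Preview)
Your argument is correct and is precisely the elementary verification the paper has in mind: the paper states this lemma without proof, listing it among the ``few elementary facts'' in the Appendix and reserving its proof only for the subsequent Proposition. One small slip: in your reformulation of $\Delta \leq_\mu \Delta'$ the right-hand side should read $|\mu_2|\bigl(s_2(\Delta) - s_2(\Delta')\bigr)$ rather than $|\mu_2|\bigl(s_2(\Delta') - s_2(\Delta)\bigr)$; since your subsequent case analysis either computes $\langle \mu, \Delta \rangle - \langle \mu, \Delta' \rangle$ directly or bounds the right-hand side in absolute value by $|\mu_2|M$, this typo does not affect the logic.
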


\begin{pro}
  The three partial orderings $\leq_\xi$, for $\xi = \mu, \lambda, \nu$, are refinements of the natural partial ordering $\leq$ on $\st_m$
  in the sense that $\Delta <_\xi \Delta'$ whenever $\Delta < \Delta'$, 
\end{pro}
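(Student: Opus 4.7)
The plan is to reduce everything to a single computational identity for the weight pairings, and then to assemble the conclusion via the two preceding lemmas. First I would write $R_l(\Delta)$ and $C_l(\Delta)$ for the sums of the sizes of the lowest $l$ rows and leftmost $l$ columns of $\Delta$, so that the two equivalent forms of dominance read $R_l(\Delta) \leq R_l(\Delta')$ and $C_l(\Delta) \geq C_l(\Delta')$ for every $l$. A one-line Abel summation yields
\[
\sum_{\alpha \in \Delta} \alpha_2 = \sum_{l \geq 1} \bigl(m - R_l(\Delta)\bigr), \qquad \sum_{\alpha \in \Delta} \alpha_1 = \sum_{l \geq 1} \bigl(m - C_l(\Delta)\bigr),
\]
with $m = |\Delta|$. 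Summing the dominance inequalities termwise gives
\[
\textstyle \sum_{\alpha \in \Delta} \alpha_2 \geq \sum_{\alpha \in \Delta'} \alpha_2 \qquad \text{and} \qquad \sum_{\alpha \in \Delta} \alpha_1 \leq \sum_{\alpha \in \Delta'} \alpha_1,
\]
and I would argue that each inequality is strict as soon as $\Delta \neq \Delta'$, since the tuple $(R_l)_l$ already determines $\Delta$ and similarly for $(C_l)_l$.

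From this the three cases fall out almost at once. For $\lambda$ with $\lambda_1 < 0 < \lambda_2$, both summands of
\[
\langle \lambda, \Delta \rangle - \langle \lambda, \Delta' \rangle = \lambda_1 \bigl(\textstyle\sum_\Delta \alpha_1 - \sum_{\Delta'} \alpha_1\bigr) + \lambda_2 \bigl(\textstyle\sum_\Delta \alpha_2 - \sum_{\Delta'} \alpha_2\bigr)
\]
are strictly positive under $\Delta < \Delta'$, so $\Delta <_\lambda \Delta'$ with no further size assumption on $\lambda$. For $\mu$, I would invoke Lemma \ref{lmm:productOrdering}: the strict inequality $\sum_\Delta \alpha_1 < \sum_{\Delta'} \alpha_1$ is precisely $\Delta <_{(-1,0)} \Delta'$, which under the lex-refinement description of $\leq_\mu$ produces $\Delta <_\mu \Delta'$. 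For $\nu$ I would appeal to Lemma \ref{lmm:dualityOfOrderings}: dominance is invariant under the transpose-reverse involution on $\st_m$, so $\Delta < \Delta'$ gives $(\Delta')^t < \Delta^t$, and the $\mu$-case applied to this transposed inequality translates back to $\Delta <_\nu \Delta'$ as claimed.

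The one step requiring care is the strictness assertion at the end of the first paragraph: one has to rule out the degenerate possibility that $\Delta \neq \Delta'$ while the sequences $(R_l)_l$ (respectively $(C_l)_l$) coincide termwise. This is where strict dominance genuinely enters, but it is routine once one recalls that the row-size (or column-size) sequence reconstructs the staircase. Notably, the asymptotic conditions $\mu_1 \ll \mu_2$ and $\nu_2 \ll \nu_1$ will play no direct role in the argument above: those hypotheses are already folded into Lemmas \ref{lmm:productOrdering} and \ref{lmm:dualityOfOrderings}, where they serve to make $\leq_\mu$ and $\leq_\nu$ into honest refinements of the partial orderings induced by the coordinate directions, rather than to refine the strict dominance inequalities themselves.
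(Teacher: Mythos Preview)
Your argument is correct and follows the same overall architecture as the paper's proof: reduce everything to the single weight $(-1,0)$, then invoke Lemma~\ref{lmm:productOrdering} for $\mu$, Lemma~\ref{lmm:dualityOfOrderings} for $\nu$, and the splitting $\langle\lambda,\Delta\rangle=\lambda_1\sum\alpha_1+\lambda_2\sum\alpha_2$ for $\lambda$. The only genuine difference is in how you establish the core inequality $\sum_{\alpha\in\Delta}\alpha_1<\sum_{\alpha\in\Delta'}\alpha_1$ under strict dominance. The paper enumerates the boxes of $\Delta\setminus\Delta'$ and $\Delta'\setminus\Delta$ in lex order and pairs them off; you instead use the Abel summation identity $\sum_{\alpha\in\Delta}\alpha_1=\sum_{l\ge1}(m-C_l(\Delta))$ and sum the dominance inequalities termwise. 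Your route is cleaner: strictness falls out immediately from the observation that the sequence $(C_l)_l$ (equivalently $(R_l)_l$) reconstructs the standard set, whereas in the paper's bijective argument one has to check that the lex enumeration really does give a coordinate-wise comparison and that at least one comparison is strict. Both approaches are short, but yours makes the dependence on dominance more transparent and handles strictness without an extra case analysis.
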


\begin{proof}
  Assume that the partial ordering $\leq_{(-1,0)}$ is a refinement of the natural partial ordering. 
  Then Lemma \ref{lmm:productOrdering} 
  implies that the statement of the proposition holds for weights $\xi = \mu$ and $\xi = \nu$. 
  Moreover, the identity $\langle \xi, \Delta \rangle = \langle (\xi_1,0), \Delta \rangle + \langle (0,\xi_2), \Delta \rangle$ 
  plus the assumption that $\lambda_1 < 0 < \lambda_2$ plus Lemma \ref{lmm:dualityOfOrderings}
  implies that the statement of the proposition holds for weight $\xi = \lambda$. 
  
  Consider two elements of $\st_m$ such that $\Delta < \Delta'$. 
  We label each box in $\Delta' \setminus \Delta$ by the its row index, 
  and each box in $\Delta \setminus \Delta'$ by the negative of its row index, 
  thus obtaining a disjoint sum of two \defn{skew Young tableaux}, cf. Figure \ref{fig:differences}.
  Then the sum of the labels of all boxes from the tableaux is $\langle (-1,0), \Delta \rangle - \langle (-1,0), \Delta' \rangle$. 
  For showing that the sum is positive, we enumerate the elements of the respective differences in the lex-increasing way, 
  \begin{equation*}
    \begin{split}
      \Delta \setminus \Delta' & = \left\{ \alpha_1, \ldots, \alpha_d \right\} , \\
      \Delta' \setminus \Delta & = \left\{ \alpha'_1, \ldots, \alpha'_d \right\} . 
    \end{split}
  \end{equation*}
  The assumption that $\Delta < \Delta'$ implies that $\alpha_i < \alpha'_i$ for all $i$. 
  Positivity follows. 
\end{proof}

Lastly we give explicit descriptions of the generic monomial ideals from Definition \ref{dfn:genericMonomialIdeals}. 

\begin{center}
\begin{figure}[ht]
  \unitlength0.40mm
  \begin{picture}(170,170)
  \put(0,0){\line(1,0){170}}
  \put(0,0){\line(0,1){170}}
  \multiput(4,133)(10,0){1}{\tiny $0$}
  \multiput(11,133)(0,-10){3}{\tiny $-1$}
  \multiput(24,103)(0,-10){1}{\tiny $2$}
  \multiput(34,103)(0,-10){2}{\tiny $3$}
  \multiput(41,83)(0,-10){2}{\tiny $-4$}
  \multiput(51,83)(0,-10){2}{\tiny $-5$}
  \multiput(61,83)(0,-10){3}{\tiny $-6$}
  \multiput(74,53)(0,-10){2}{\tiny $7$}
  \multiput(84,53)(0,-10){2}{\tiny $8$}
  \multiput(91,33)(0,-10){2}{\tiny $-9$}
  \multiput(102.5,13)(0,-10){2}{\tiny $10$}
  \multiput(112.5,13)(0,-10){2}{\tiny $11$}
  \multiput(122.5,13)(0,-10){2}{\tiny $12$}
  \color{red}
  \put(0,140){\line(1,0){20}}
  \put(20,140){\line(0,-1){40}}
  \put(20,100){\line(1,0){10}}
  \put(30,100){\line(0,-1){10}}
  \put(30,90){\line(1,0){40}}
  \put(70,90){\line(0,-1){50}}
  \put(70,40){\line(1,0){30}}
  \put(100,40){\line(0,-1){40}}
  \color{blue}
  \put(0,130){\line(1,0){10}}
  \put(10,130){\line(0,-1){20}}
  \put(10,110){\line(1,0){30}}
  \put(40,110){\line(0,-1){40}}
  \put(40,70){\line(1,0){20}}
  \put(60,70){\line(0,-1){10}}
  \put(60,60){\line(1,0){30}}
  \put(90,60){\line(0,-1){40}}
  \put(90,20){\line(1,0){40}}
  \put(130,20){\line(0,-1){20}}
  \end{picture}
\caption{The differences of two standard sets ${\color{red}\Delta} \leq {\color{blue}\Delta'}$}
\label{fig:differences}
\end{figure}
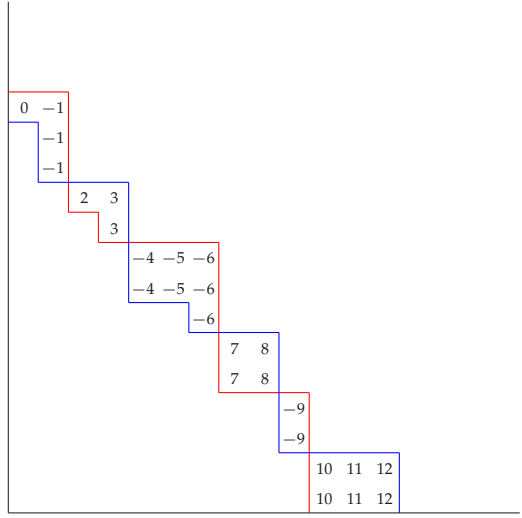
\end{center}

\begin{pro}
  Let $u,v \in \Z^2$ be weights such that $u_1,u_2 < 0$ and $v_1,v_2 > 0$, respectively. 
  \begin{enumerate}[(i)]
    \item The generic monomial ideal in $H^n(\A^2)$ with respect to $u$ is the unique $M_\Gamma$ such that for 
    all $\alpha \in \N^2 \setminus \Gamma$ and for all $\beta \in \Gamma$, the inequality 
    \[
      \langle u, \alpha - \beta \rangle < 0
    \]
    holds true. 
    \item More explicitly, if $u_1 < u_2$, then $\Gamma$ contains the first $n$ members of the sequence 
    \[
      \begin{array}{cccc}
      (0,0), \\
      (0,1), \\
      \vdots \\
      (0,m-1), \\
      (1,0), & (0,m), \\
      (1,1), & (0,m+1), \\
      \vdots \\
      (1,m-1), & (0,2m-1), \\
      (2,0), & (1,m), & (0,2m), \\
      (2,1), & (1,m+1), & (0,2m+1), \\
      \vdots \\
      (2,m-1), & (1,2m-1), & (0,3m-1), & \ldots ,
      \end{array}
    \]
    where $m$ is the unique integer such that $m-1 < u_2 / u_1 < m$. 
    If $u_1 > u_2$, then the transposed analogue of this statement holds true. 
    \item The generic monomial ideal in $H^{n,\punc}(\A^2)$ with respect to $v$ is the 
    vertical strip $\{0\} \times \{0, \ldots, n-1\}$ if $v_1 < v_2$ and the horizontal strip $\{0, \ldots, n-1\} \times \{0\}$ otherwise. 
  \end{enumerate}
\end{pro}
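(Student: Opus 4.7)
The plan combines two ingredients: the Bia\l ynicki--Birula dimension formula, and the explicit description of the tangent space at a $T$-fixed monomial ideal. Since $H^n(\A^2)$ is smooth of dimension $2n$, the dimension of $H^\Delta_u(\A^2)$ equals the number of $T$-weights on the tangent space $T_{M_\Delta}H^n(\A^2) = \mathrm{Hom}_{S'}(M_\Delta, S'/M_\Delta)$ that evaluate positively under the induced one-parameter torus. The Ellingsrud--Str\o mme arm--leg enumeration gives a canonical equivariant basis of $2n$ weight spaces, each of the form $\alpha - \beta$ with $\alpha \in \Delta$ and $\beta$ a minimal generator of $M_\Delta$ (so $\beta \notin \Delta$). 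The paper's inequality $\langle u, \alpha - \beta\rangle < 0$ for $\alpha \notin \Delta$, $\beta \in \Delta$ is, after relabeling, the requirement that $\langle u, \cdot\rangle$ be larger on $\Delta$ than off $\Delta$.

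For part (i), I would define $\Gamma$ to be the set of $n$ lattice points in $\N^2$ on which $\langle u,\cdot\rangle$ is largest. Genericity of $u$ makes this set well defined, and $u_1, u_2 < 0$ forces $\langle u,\cdot\rangle$ to be strictly decreasing in the coordinatewise partial order on $\N^2$, so $\Gamma$ is downward closed, hence a standard set. By construction $\Gamma$ satisfies the global inequality of (i). This covers in particular all arm--leg tangent pairs at $M_\Gamma$, so every tangent weight is positive under $u$, making $H^\Gamma_u(\A^2)$ of maximal dimension $2n$. Irreducibility of $H^n(\A^2)$ forces the top-dimensional BB cell to be unique, so $\Gamma$ is the generic standard set; uniqueness of $\Gamma$ satisfying the global inequality is automatic from separation of $\langle u,\cdot\rangle$-values for generic $u$. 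The main technical hurdle is invoking the Ellingsrud--Str\o mme description with the correct sign and transposition conventions, which I would verify on a small case.

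For part (ii), I would enumerate $\N^2$ in decreasing order of $\langle u,\cdot\rangle$ and take the first $n$. When $|u_1| > |u_2|$, the column $y_1^0$ monomials $(0, 0), (0, 1), \ldots$ dominate the top of the list, and the first element from column $y_1^1$, namely $(1, 0)$, is inserted between $(0, m-1)$ and $(0, m)$ where $m$ is characterized by the jump condition $(m-1)|u_2| < |u_1| < m |u_2|$. Iterating this insertion rule across columns reproduces the displayed sequence; the complementary case $|u_1| < |u_2|$ is obtained by swapping the roles of the two coordinates.

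For part (iii), I would argue by direct exhibition since $H^{n,\punc}(\A^2)$ is irreducible of dimension $n-1$ by Brian\c con but is generally singular, ruling out a tangent-weight count. In the case $v_1 < v_2$, take $\Delta = \{0\}\times\{0,\ldots,n-1\}$ with $M_\Delta = \langle y_1, y_2^n\rangle$. The $(n-1)$-parameter family
\[
I_{c_1,\ldots,c_{n-1}} := \langle y_1 + c_1 y_2 + \cdots + c_{n-1}y_2^{n-1},\ y_2^n\rangle
\]
consists of punctual length-$n$ ideals, and under the $v$-action each coefficient $c_i$ rescales by $t^{v_2 i - v_1}$. Since $v_1, v_2 > 0$ with $v_1 < v_2$ forces $v_2 i - v_1 > 0$ for all $i \geq 1$, every $c_i$ tends to $0$ as $t \to 0$ and the family flows to $M_\Delta$; this realizes an $(n-1)$-dimensional BB cell, which Brian\c con's irreducibility forces to be the unique top-dimensional one. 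The case $v_1 > v_2$ follows by transposition. The main obstacle in this part is the lack of smoothness, which I sidestep by combining an explicit maximal family with Brian\c con's irreducibility.
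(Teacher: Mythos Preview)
Your proposal is correct and, for parts (ii) and (iii), essentially identical to the paper's proof: the paper also declares (ii) elementary and, for (iii), exhibits the same $(n-1)$-parameter family $\langle y_1 - \sum_j T_j y_2^j,\ y_2^n\rangle$ and invokes Brian\c con's irreducibility.

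For part (i) there is a mild difference in presentation worth flagging. The paper does not go through the tangent space $\mathrm{Hom}_{S'}(M_\Delta,S'/M_\Delta)$ or the arm--leg basis. Instead it works directly with the Haiman-style open affine patch $H^\Delta(\A^2)$, whose coordinate ring is generated (with relations) by variables $T_{\alpha,\beta}$ for $\alpha\in\widehat\Delta\setminus\Delta$, $\beta\in\Delta$; the BB sink $H^\Delta_u(\A^2)$ is obtained by killing those $T_{\alpha,\beta}$ with $\langle u,\alpha-\beta\rangle>0$, and full dimension $2n$ is equivalent to killing none of them. Your tangent-weight count and the paper's coordinate count are two faces of the same computation: the cotangent space at $M_\Gamma$ is a $T$-equivariant quotient of the span of the $T_{\alpha,\beta}$, so positivity of all $T_{\alpha,\beta}$-weights forces positivity of all tangent weights, and conversely the BB dimension formula on a smooth variety recovers the coordinate statement. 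Your version has the advantage of also spelling out the construction of $\Gamma$ as the $n$ lattice points with largest $\langle u,\cdot\rangle$-value and checking it is a standard set, which the paper leaves implicit in calling (ii) ``elementary''. The paper's version avoids any appeal to the Ellingsrud--Str\o mme arm--leg description and the sign bookkeeping you flag as a hurdle, since the Haiman coordinates already have $\alpha\notin\Delta$, $\beta\in\Delta$ built in.
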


\begin{proof}
  (i) The scheme $H^n(\A^2)$ is covered by affine open patches
  \[
    H^\Delta(\A^2) := \bigl\{ \text{ideals } I \subseteq S' : S' / I \text{ is free with a basis } (x^\beta : \beta \in \Delta) \bigr\}, 
  \]
  one for each standard set $\Delta$ of cardinality $n$ \cite{krbook,huibregtseElementary,norge,strata}. 
  The coordinate ring of that scheme is a quotient of polynomial ring
  \[
    T := \CC[T_{\alpha,\beta} : \alpha \in \widehat{\Delta}] ,
  \]
  where $\widehat{\Delta}$ is a sufficiently large finite standard set containing $\Delta$,  
  by an ideal $J \subseteq T$ spanned by quadratic equations expressing that the quotient of $T[x_1,x_2]$ by the ideal 
  \[
    I := \bigl\langle x^\alpha - \sum_{\beta \in \Delta} T_{\alpha,\beta} x^\beta : \alpha \in \widehat{\Delta} \setminus \Delta \bigr\rangle
  \]
  be a free $T$-module with basis $(x^\beta : \beta \in \Delta)$. 
  In other words, $I$ defines a  $T$-valued point in $H^\Delta(\A^2)$. 
  The BB sink $H^\Delta_u(\A^2)$ is obtained from $H^\Delta(\A^2)$ by killing all $T_{\alpha,\beta}$ 
  such that $\langle u, \alpha - \beta \rangle > 0$. 
  The scheme $H^n(\A^2)$ being smooth of dimension $2n$, so is $H^\Delta(\A^2)$. 
  The same holds true for $H^\Delta_u(\A^2)$ if, and only if, none of the variables $T_{\alpha,\beta}$ 
  such that $\langle u, \alpha - \beta \rangle > 0$ get killed. 
  
  (ii) is elementary. 
  
  (iii) The scheme $H^{n,\punc}(\A^2)$ is covered by affine open patches $H^{\Delta,\punc}(\A^2) := H^\Delta(\A^2) \cap H^{n,\punc}(\A^2)$. 
  The ideal 
  \[
    I := \bigl\langle x_1 - \sum_{j = 1}^{n-1} T_j x_2^j, x_2^n \bigr\rangle ,
  \]
  where the $T_j$ are variables, 
  defines a $\CC[T_1,\ldots,T_n]$-valued point in $H^{n,\punc}(\A^2)$. 
  Therefore, if $\Gamma$ is the vertical strip as defined in the proposition, 
  then $H^{\Gamma,\punc}(\A^2) = \Spec \CC[T_1,\ldots,T_n]$. 
  If $v_1 < v_2$, then $I$ lies in the BB sink of the $T$-action with weight $v$. 
  This proves the one half of (iii), the second following by symmetry. 
\end{proof}

\bibliography{references}
\bibliographystyle{amsalpha}

\end{document}